\documentclass[11pt,leqno,twoside]{article}
\usepackage{multirow} 
\usepackage{mathrsfs,amssymb,amsfonts,amsthm,amsmath,natbib,amsbsy}
\usepackage{dsfont,verbatim,color,fancyhdr}
\usepackage{graphicx,float} 
\usepackage[hang]{caption} 
\usepackage{enumerate} 
\usepackage[normalem]{ulem}
\usepackage[bookmarks,bookmarksnumbered,colorlinks=true,pdfstartview=FitV, linkcolor=red,citecolor=blue,urlcolor=blue]{hyperref}
\setlength{\arraycolsep}{0.5mm}
\usepackage{pdfsync}

\setlength{\topmargin}{0mm}
\setlength{\evensidemargin}{4mm}
\setlength{\oddsidemargin}{4mm} 
\setlength{\textwidth}{15.7cm} 
\setlength{\textheight}{20cm}
\parskip=0mm
\allowdisplaybreaks


\setlength{\bibsep}{0mm} 
\bibpunct{\textcolor{blue}{(}}{\textcolor{blue}{)}}{;}{a}{\textcolor{blue}{,}}{\textcolor{blue}{;}}

\long\def\symbolfootnote[#1]#2{\begingroup\def\thefootnote{\hspace*{-1mm}\fnsymbol{footnote}}\footnote[#1]{#2}\endgroup}


\def \A {\mathcal{A}}
\def \B {\mathcal{B}}
\def \D {\mathscr{D}}
\def \E {\mathbb{E}}
\def \I {\mathbb{I}}
\def \N {\mathbb{N}}
\def \P {\mathbb{P}}
\def \PD {\mathrm{PD}}
\def \T {\mathcal{T}}

\def \DK {\Delta_{K}}
\def \NK {\nabla_{K}}
\def \Ni {\nabla_{\infty}}
\def \Nic {\overline{\nabla}_{\infty}}
\def \eps {\varepsilon}

\def \hat {\widehat}
\def \tilde {\widetilde}
\def \bar{\overline}
\def \e {\mathbb{E}}

\DeclareMathOperator*{\bplim}{bp\,\mbox{-}lim}
\DeclareMathOperator*{\grad}{grad}



\newtheorem{theorem}{Theorem}[section]
\newtheorem{proposition}[theorem]{Proposition} 

\newtheorem{lemma}[theorem]{Lemma} 
 
\newtheorem{definition1}[theorem]{Definition} 
 
\newtheorem{remark1}[theorem]{Remark} \newenvironment{remark}{\begin{remark1}\rm}{\hfill$\square$\end{remark1}}


\title{\vspace{-15mm}\bf  
Wright--Fisher construction of the two-parameter Poisson--Dirichlet diffusion
}
\author{
\textsc{Cristina Costantini}\\
\emph{University of Chieti-Pescara}\\[1mm]
\textsc{Pierpaolo De Blasi}\\
\emph{University of Torino and Collegio Carlo Alberto}\\[1mm]
\textsc{Stewart N. Ethier}\\
\emph{University of Utah}\\[1mm]
\textsc{Matteo Ruggiero}\\
\emph{University of Torino and Collegio Carlo Alberto}\\[1mm]
\textsc{Dario Span\`o}\\
\emph{University of Warwick}
}
\date{\today}



\pagestyle{fancy}
\fancyhf{}
\fancyhead[LE,RO]{\thepage} 
\fancyhead[LO]{\textit{Wright--Fisher construction of Poisson--Dirichlet diffusion}} 

\addtolength{\headsep}{10pt}
\addtolength{\headheight}{2pt}
\fancypagestyle{plain}{
\fancyhead{}
}

\begin{document}

\maketitle
\thispagestyle{empty}

\begin{abstract}
The two-parameter Poisson--Dirichlet diffusion, introduced in 2009 by Petrov, extends the infinitely-many-neutral-alleles diffusion model, related to Kingman's one-parameter Poisson--Dirichlet distribution and to certain Fleming--Viot processes. The additional parameter has been shown to regulate the clustering structure of the population, but is yet to be fully understood in the way it governs the reproductive process. Here we shed some light on these dynamics by formulating a $K$-allele Wright--Fisher model for a population of size $N$, involving a uniform mutation pattern and a specific state-dependent migration mechanism.  Suitably scaled, this process converges in distribution to a $K$-dimensional diffusion process as $N\to\infty$.  Moreover, the descending order statistics of the $K$-dimensional diffusion converge in distribution to the two-parameter Poisson--Dirichlet diffusion as $K\to\infty$.  The choice of the migration mechanism depends on a delicate balance between reinforcement and redistributive effects.  The proof of convergence to the infinite-dimensional diffusion is nontrivial because the generators do not converge on a core.  Our strategy for overcoming this complication is to prove \textit{a priori} that in the limit there is no ``loss of mass'', i.e., that, for each limit point of the sequence of finite-dimensional diffusions (after a reordering of components by size), allele frequencies sum to one.\medskip

\noindent \textit{Key words and phrases}: infinite-dimensional diffusion process, two-parameter Poisson--Dirichlet distribution, reinforcement, migration, Wright--Fisher model, weak convergence.\medskip

\noindent\textit{AMS 2010 subject classifications}: Primary 92D25; secondary 60J60, 60G57, 60F17.
\end{abstract}


\section{Introduction}\label{intro}

The goal of this paper is to provide a discrete-time finite-population construction of the two-parameter Poisson--Dirichlet diffusion, extending an analogous construction for the well-known infinitely-many-neutral-alleles diffusion model provided in \cite{EK81}.  Introduced by \cite{P09} and henceforth called the two-parameter model, this diffusion process assumes values in the infinite-dimensional ordered simplex (sometimes also called the Kingman simplex)
\begin{equation}\label{nabla-infty-closure}
\Nic:=\bigg\{z=(z_1,z_2,\ldots)\in[0,1]^{\infty}:\,z_{1}\ge z_{2}\ge\cdots\ge0,\;\sum_{i=1}^{\infty}z_{i}\le1\bigg\}
\end{equation}
and describes the temporal evolution of the ranked frequencies of infinitely many potential alleles, observed at a single gene locus, in a given population of large but finite size.  An exhaustive review of these and other models for stochastic population dynamics can be found in \cite{F10}. Further investigations of the two-parameter model include \cite{RW09}, who provide a particle construction; \cite{FS10}, who study some path properties using Dirichlet forms; \cite{Fetal11}, who find the transition density function; \cite{RWF13}, who show that an instance of the two-parameter model arises as a normalised inverse-Gaussian diffusion conditioned on having a fixed environment; \cite{R14}, who shows that the clustering structure in the population is driven by a continuous-state branching process with immigration; \cite{E14}, who shows that, with probability one, the diffusion instantly enters the dense subset 
\begin{equation}\label{nabla-infty}
\Ni:=\bigg\{z=(z_1,z_2,\ldots)\in[0,1]^{\infty}:\,z_{1}\ge z_{2}\ge\cdots\ge0,\;\sum_{i=1}^{\infty}z_{i}=1\bigg\}
\end{equation}
and never exits; and \cite{Z15}, who simplifies the formula for the transition density and establishes an ergodic inequality.

The two-parameter model is known to be reversible \citep{P09} with respect to the two-parameter Poisson--Dirichlet distribution $\PD(\theta,\alpha)$, where $0\le\alpha<1$ and $\theta>-\alpha$ \citep{PPY92,P95,PY97}. When $\alpha=0$, the model reduces to the infinitely-many-neutral-alleles diffusion model, henceforth called the one-parameter model, with Poisson--Dirichlet reversible distribution $\PD(\theta):=\PD(\theta,0)$ \citep{K75}.  The two-parameter Poisson--Dirichlet distribution $\PD(\theta,\alpha)$ has found numerous applications in several fields: See for example \cite{B06} for fragmentation and coalescent theory; \cite{P06} for excursion theory and combinatorics; \cite{LP09} for Bayesian inference; and \cite{TJ09} for machine learning.  However, in the dynamic setting, the two-parameter  model is not as well understood as the one-parameter special case, which motivates the need for further investigation. 

One of the main differences between the $\PD(\theta)$ and $\PD(\theta,\alpha)$ distributions is the fact that the former arises as the weak limit of ranked Dirichlet frequencies \citep{K75}, whereas a similar construction is not available for the two-parameter case. In the dynamical framework, one possible construction of the one-parameter model is as the limit in distribution as $K\to\infty$ of a $K$-dimensional diffusion process of Wright--Fisher type with components rearranged in descending order. Each of these Wright--Fisher diffusions can in turn be constructed as the limit in distribution as $N\to\infty$ of a suitably scaled $K$-allele Wright--Fisher Markov chain model for a randomly mating population of size $N$ with discrete nonoverlapping generations and uniform mutation \citep{EK81}.  In contrast, an analogous construction in the case $0<\alpha<1$ has not, to the best of our knowledge, been published.  The importance of finding examples of processes with these features for the two-parameter model lies in the possibility of revealing the reproductive mechanisms acting at the level of individuals, thus providing interpretation for the roles played by the parameters $\theta$ and $\alpha$ in the dynamics of the population's allele frequencies, partially hidden or difficult to interpret in the infinite-dimensional model. In Section \ref{sec: 2parmodel} we will provide more comments on this point and on the other existing sequential constructions for the two-parameter model.

In this paper we show that the two-parameter model can be derived from a Wright--Fisher Markov chain model.  As with the one-parameter model, there are two limit operations involved.  We start with a $K$-allele Wright--Fisher model for a randomly mating population of size $N$ with discrete nonoverlapping generations, a uniform mutation pattern, and a specific state-dependent migration mechanism. It is not difficult to see that this process, suitably scaled, converges in distribution to a $K$-dimensional Wright--Fisher diffusion as $N\to\infty$.  The process obtained by applying the descending order statistics to this Wright--Fisher diffusion is itself a diffusion (i.e., the Markov property is retained), which we show converges in distribution to the two-parameter model as $K\to\infty$. 

We also show that the two-parameter Poisson--Dirichlet distribution\linebreak $\text{PD}(\theta,\alpha)$ is the weak limit of the stationary distributions of the Wright--Fisher diffusions we obtain (modified to account for the rearranging of components in descending order), by analogy to what happens in the one-parameter case, where these stationary distributions are symmetric Dirichlet distributions.

Our Wright--Fisher model includes migration and mutation.  Mutation is uniform as before but with mutation rate proportional to $\theta+\alpha$ instead of just $\theta$.  Migration, which acts first, also depends on $\alpha$ and is governed by a generalisation of the classical island model.  In that model, the frequency of allele $i$ on the island after migration (in the gametic pool) is
\begin{equation}\label{island1}
z_i^*=z_i+p_i m-z_i m,
\end{equation}
with $z_i$ being its frequency on the island prior to migration, $m$ being the migration rate, and $p_i$ being the frequency of allele $i$ in the mainland population.  We generalise this in two ways, neither of which is conventional in the population genetics literature.  First, we allow the migration rate to be allele-dependent, so that \eqref{island1} is replaced by
\begin{equation}\label{island2}
z_i^*=z_i+p_i m(z)-z_i m_i,\quad \text{where}\quad m(z)=\sum_{j=1}^K z_j m_j.
\end{equation}
Here $m_i$ is the migration rate for allele $i$ and $m(z)$ is the overall migration rate.  The second generalisation allows all parameters to be state-dependent, that is, to depend on the vector $z$ of allele frequencies on the island.  Thus, \eqref{island2} is replaced by
\begin{equation}\label{island3}
z_i^*=z_i+p_i(z) m(z)-z_i m_i(z),\quad \text{where}\quad m(z)=\sum_{j=1}^K z_j m_j(z).
\end{equation}
Here $m_i(z)$ is the migration rate for allele $i$, $m(z)$ is the overall migration rate, and $p_i(z)$ is the frequency of allele $i$ in the mainland population.  The  form of the functions $m_i(z)$ and $p_i(z)$ will be specified later on, but for now we point out only that $m_i(z)$ depends on $z_i$ alone and is a decreasing function of that variable that does not depend in $i$, and $p_i(z)>p_j(z)$ if $z_i<z_j$.  Thus, more frequent alleles on the island are less likely to emigrate (so emigration provides a reinforcement effect), and less frequent alleles on the island are more frequent on the mainland and therefore more likely to immigrate (so immigration provides a redistributive effect).

The proof of convergence in distribution of the $K$-dimensional Wright--Fisher diffusion, with components rearranged in descending order, to the two-parameter model as $K\to\infty$ is nontrivial and requires a new approach.  The difficulty arises essentially from the fact that, with $\B_K$ denoting the generator of the reordered $K$-dimensional diffusion, and $\B$ denoting the generator of the two-parameter model, $\B_K\varphi$ does not converge to $\B\varphi$ on $\Nic$ for certain $\varphi$ in the domain $\D(\B)$ of $\B$ (see Section \ref{sec: 2parmodel}).  The simplest such $\varphi$ is the so-called {\em homozygosity}, $\varphi_2(z):=\sum\nolimits_{i=1}^{\infty}z_i^2$.  At the same time, it is not possible to eliminate $\varphi_2$ from the domain of $\B$, because the resulting space of functions would not be a core for the closure of $\B$.  As a consequence, the approach followed in \cite{EK81} to study the one-parameter model fails here, and so do various other similar approaches.  A more complete discussion of these issues can be found at the beginning of Section \ref{sec: convergence to full model}. 

Here we take the martingale problem approach, i.e., we view the reordered $K$-dimensional diffusion as the solution of the martingale problem for $\B_K$, and, as is usual in this approach, try to carry out three steps:  (i) Show that the sequence of finite-dimensional diffusions is relatively compact; (ii) Show that each of its limit points is a solution to the martingale problem for $\B$;  (iii) Show that the martingale problem for $\B$ has a unique solution.  As may be expected, the difficulty described above shows up in this approach as well: If the domain of $\B$ includes $\varphi_2$, then it is not clear that the limit martingale property will hold for the pair $(\varphi_2,\B\varphi_2)$.  On the other hand, if $\varphi_2$ is excluded from the domain of $\B$, then the martingale problem for $\B$ may have more than one solution.

However, in the martingale problem framework we are able to overcome the difficulty by proving \textit{a priori} that, for any limit point $Z$ of the sequence of finite-dimensional diffusions, with probability one, for almost all $t\geq 0$, $Z(t)$ belongs to $\Ni$ (cf.~\eqref{nabla-infty}).  The argument employed in this proof was inspired by the proof of Theorem 2.6 of \cite{EK81} and relies on a double limit, taken in the appropriate order.  When restricted to $\Ni$, $\B_K\varphi_2$ does converge to $\B\varphi_2$, and this yields that the limit martingale property carries over to $(\varphi_2,\B\varphi_2)$, and thus that the limit martingale problem has a unique solution.

In the one-parameter case, it is possible to also formulate a Wright--Fisher model with infinitely many alleles and obtain the limit process as $N\to\infty$ ($K$ is already $\infty$); see \cite{EK81}, Theorem 3.3.  That theorem requires some rather delicate estimates and we were unsuccessful in trying to extend it to the two-parameter setting.  

The paper is organised as follows. In Section \ref{sec: 2parmodel} the two-parameter model is recalled.  Section \ref{sec: WF chain} provides the construction of the $K$-allele Wright--Fisher Markov chain for a population of size $N$.  In Section \ref{sec: K-diffusion} the Wright--Fisher chain, scaled appropriately, is shown to converge in distribution to a $K$-dimensional Wright--Fisher diffusion as $N\to\infty$.  Then, in Section \ref{sec: convergence to full model}, the $K$-dimensional diffusion, with coordinates rearranged in descending order, is shown to converge to the two-parameter model as $K\to\infty$.  In Section \ref{sec: conv of stat dist} analogous results are proved for the stationary distributions.  Section \ref{sec:7} concludes by highlighting a slightly simpler formulation, obtained under the assumption that $\theta\ge0$, which allows us to separate the roles of $\theta$ and $\alpha$ in driving the population dynamics.


\section{The two-parameter model}
\label{sec: 2parmodel}

The two-parameter model was introduced by \cite{P09}.  As with its one-parameter counterpart, characterised in \cite{EK81}, it describes the temporal evolution of infinitely many allele frequencies.  A natural state space for the process is $\Ni$, defined in \eqref{nabla-infty}.  However, the closure of $\Ni$ (in the product topology on $[0,1]^\infty$), namely $\Nic$, defined in \eqref{nabla-infty-closure}, is compact and therefore more convenient as a state space.  Consider, for parameters $0\le\alpha<1$ and $\theta>-\alpha$, the second-order differential operator $\B$ defined as follows. The domain of $\B$ is 
\begin{equation}\label{domain B}
\D(\B):=\text{subalgebra of } C(\Nic)\text{ generated by }\varphi_1,\varphi_2,\varphi_3,\ldots,
\end{equation} 
where $\varphi_1\equiv1$ and, for $m=2,3,\ldots$, $\varphi_m$ is defined by
\begin{equation}\label{varphi_m}
\varphi_{m}(z):=\sum_{i=1}^\infty z_{i}^{m}.
\end{equation}
For $\varphi \in \D(\B)$, $\B \varphi$ is the continuous extension to $\Nic$ of 
\begin{equation}\label{operator inf-inf}
\B \varphi (z):=\frac{1}{2}\sum_{i,j=1}^{\infty}z_{i}(\delta_{ij}-z_{j})\frac{\partial^{2} \varphi(z)}{\partial z_{i}\,\partial z_{j}}-\frac{1}{2}\sum_{i=1}^{\infty}(\theta z_{i}+\alpha)\frac{\partial \varphi(z)}{\partial z_{i}},\quad z \in \Ni ,
\end{equation} 
with $\delta_{ij}$ the Kronecker delta.  For example,
\begin{equation}\label{Bvarphi2}
\B \varphi_2 (z):=1-\alpha-(1+\theta)\varphi_2(z),\quad z \in \Nic.
\end{equation} 
As shown by \cite{P09}, the closure of $\B$ generates a Feller semigroup on $C(\Nic)$, which characterises the finite-dimensional distributions of the two-parameter model $Z$, and the sample paths of $Z$ belong to $C_{\Nic}[0,\infty)$ with probability one.  Recently \cite{E14} proved that, for an arbitrary initial distribution $\nu\in\mathscr{P}(\Nic)$, we have
\begin{equation*}
\P(Z(t)\in\Ni \text{ for every }t>0)=1,
\end{equation*} 
that is $\Nic-\Ni$ acts as an entrance boundary (note however that technically it is not a boundary because $\Ni$ has no interior).  In particular, if $\nu(\Ni)=1$, then the sample paths of $Z$ belong to  $C_{\Ni}[0,\infty)$ with probability one.

The diffusion coefficients in the first term of \eqref{operator inf-inf} describe the instantaneous covariance, related to the allelic sampling, also called \emph{random genetic drift}.  The interpretation of the drift coefficients in the second term of \eqref{operator inf-inf} is not as clear, and is the object of primary interest in this paper.  
It is worth noting that the one-parameter model, obtained by setting $\alpha=0$ in \eqref{operator inf-inf}, admits the following two interpretations. 

First, the one-parameter model, also known as the \textit{unlabelled} infinitely-many-neutral-alleles diffusion model, has a more informative \textit{labelled} version, namely the Fleming--Viot process in $\mathscr{P}(S)$ (the set of Borel probability measures on the compact metric space $S$ with the topology of weak convergence) with mutation operator
\begin{equation*}
Ag(x):=\frac{1}{2}\theta\int_S(g(\xi)-g(x))\,\nu_0(d\xi),
\end{equation*}
where $\nu_0\in\mathscr{P}(S)$ is nonatomic.  The unlabelled model is a transformation of the labelled one. The transformation takes $\mu\in\mathscr{P}(S)$ to $z\in\Nic$, where $z$ is the vector of descending order statistics of the sizes of the atoms of $\mu$. See \cite{EK93}.

The second interpretation is as the limit in distribution of a $K$-allele Wright--Fisher diffusion, with components rearranged in descending order,  as $K\to\infty$, where the rate of a mutation from allele $i$ to allele $j$ is proportional to $\theta$. See \cite{EK81}. 

As a result of these correspondences, $\theta$ is usually interpreted as the rate at which mutations occur.  Similar interpretations for the two-parameter model, however, are not available:  First, the existence of a Fleming--Viot process whose unlabelled version is the two-parameter model is an open problem (posed in \citealp{F10}).  Second, for $0<\alpha<1$, a Kingman-type result expressing PD($\theta,\alpha$) as the limit in distribution of a sequence of finite-dimensional random vectors is not available; hence it does not offer a guide for a Wright--Fisher construction, as in the one-parameter case.  Consequently, the interpretation of $\alpha$ cannot be deduced from existing work.  The role of $\alpha$ has been associated rather indirectly to mutation in a particle construction of the two-parameter model, given in \cite{RW09}, where $\theta$ and $\alpha$ jointly regulate births from the same distribution. They propose a Moran-type process for the evolution of $N$ individuals, whereby at exponential times a randomly chosen individual is removed from the population after either giving a simple birth, with the offspring inheriting the parent's type, or giving a birth with mutation, with the offspring being of a type not previously observed. The probabilities of these events are regulated by the weights of Pitman's generalisation of the Blackwell-MacQueen P\'olya urn scheme \cite{P95,P96}. In particular, with probability proportional to $\theta+\alpha k$, where $k$ is the current number of distinct types present in the population, a birth with mutation occurs, whereas with probability proportional to $n_{j}-\alpha k$, where $n_{j}$ is the current number of type-$j$ individuals, a simple birth of type $j$ occurs. The original sequential construction in \cite{P09} instead relates to a discrete Markov chain on the space of partitions of $\{1,\ldots,N\}$ and offers no insight into the role of $\alpha$ at the reproduction level. 
Both these constructions feature overlapping generations and fall into the infinitely-many-types setting, in the sense that they both allow the possibility of new types appearing in the population chosen from an uncountable genetic pool. 

Here, instead, we are interested in a construction of the two-parameter model by means of a classical Wright--Fisher Markov chain, with non overlapping generations and finitely many types, since this would reveal details about how the reproduction acts at the individual level, which an inspection of $\B$ does not.  As an illustration of this aspect, consider the construction of the one-parameter model via a Wright--Fisher Markov chain with $K$ alleles in a population of size $N$.  If $z=(z_1,\ldots,z_K)$ is the vector of allele frequencies prior to mutation, the frequency of allele $i$ individuals after mutation is $z_{i}(1-\sum\nolimits_{j:j\ne i}u_{ij})+\sum\nolimits_{j:j\ne i}z_{j}u_{ji}$, where 
\begin{equation}\label{mutation probability}
u_{ij}:=\frac{\theta}{2N(K-1)}, \quad j\ne i,
\end{equation} 
is the proportion of individuals of allele $i$ that mutate to allele $j$, for sufficiently large $N$.  It can be easily seen that the expected change of $z_{i}$, multiplied by $N$, is given by the drift coefficient
\begin{equation}\label{theta drift}
\frac{1}{2}\bigg[\frac{\theta}{K-1}(1-z_{i})-\theta z_{i}\bigg],
\end{equation} 
which converges to $-(1/2)\theta z_{i}$ when $K\to\infty$. See \cite{EK81} for more details. This construction provides insight into the role of $\theta$ in the mutation process, only partially readable from \eqref{operator inf-inf} with $\alpha=0$;  it is indeed by inspection of \eqref{mutation probability} that one can see that the probability of an individual mutation is inversely proportional to the population size and the mutant type distribution is uniform on the other $K-1$ alleles; the rate $\theta$ determines how often the mutation events occur.  

Here we seek a similar insight, at the same level of magnification, on the action of $\alpha$ in the two-parameter model. In this case, the drift coefficients in \eqref{operator inf-inf} are $-\frac{1}{2}(\theta z_i+\alpha)$.  The key observation for the following development is to think of them as
\begin{equation}\label{limit drift}
-\frac{1}{2}(\theta+\alpha)z_i-\frac{1}{2}\alpha(1-z_i),
\end{equation}
the first term corresponding to mutation and the second term to migration. The first term is the limit as $K\to\infty$ of the analogue of \eqref{theta drift}, namely
\begin{equation*}
\frac{1}{2}\bigg[\frac{\theta+\alpha}{K-1}(1-z_{i})-(\theta+\alpha) z_{i}\bigg], 
\end{equation*}
while the second term should be the limit of the migration terms in the $K$-allele drift coefficients.


\section{A Wright--Fisher model with state-dependent migration}\label{sec: WF chain}

Consider a population of $N$ individuals, and let the maximum number of alleles in the population be $K\ge 2$. The population size is assumed to be constant and generations are nonoverlapping.  Denote by $z_{i}$ the relative frequency of allele $i$ in the current generation at the selected locus.  We assume the presence of migration and mutation, as discussed in Section \ref{intro}.  The state space is
\begin{equation}\label{DK}
\DK:=\bigg\{z=(z_1,\ldots,z_K)\in[0,1]^K:\; z_1\geq 0,\ldots,z_K\ge0,\, \sum_{i=1}^K z_i=1  \bigg\}
\end{equation} 
or, more precisely, 
\begin{equation}\label{DK-discrete}
\Delta_K^N:=\{z=(z_1,\ldots,z_K)\in\Delta_K: Nz\in\mathbb{Z}^K\}.
\end{equation}
The frequency of allele $i$ after migration (in the gametic pool) is 
\begin{equation}\label{migration}
z_i^*=z_i+p_i(z) m(z)-z_i m_i(z),\quad \text{where}\quad m(z)=\sum_{j=1}^K z_j m_j(z),
\end{equation}
as discussed in the Introduction (see \eqref{island3}).  With $u_{ij}$ denoting the proportion of individuals of allele $i$ that mutate to allele $j$, the frequency of allele $i$ after mutation (in the gametic pool) is
\begin{equation}\label{mutation}
z_{i}^{**}:=z_i^*+\sum_{j:j\ne i}z_{j}^{*}u_{ji}-z_i^*\sum_{j:j\ne i}u_{ij}.
\end{equation} 
Finally, random genetic drift is modelled by multinomial sampling, which amounts to assuming that each individual of the next generation chooses its parent at random from the current generation.  Then the next generation's allele frequencies $z_1',\ldots,z_K'$ are formed according to the rule 
\begin{equation}\label{reproduction mechanism}
z'\mid z\sim N^{-1}\text{multinomial}(N,z^{**}_{1},\ldots,z^{**}_{K}),
\end{equation}
i.e., $N z'$ has a multinomial distribution with sample size $N$ and cell probabilities $(z^{**}_{1},\ldots,z^{**}_{K})$. This is the classic Wright--Fisher model with migration and mutation in the state space $\Delta_K^N$, and without migration it corresponds to eq.~(2.2) in \cite{EK81}. For a more complete description of the Wright--Fisher model and its underlying assumptions, we refer the reader to Section 9.9 of \cite{N92}, but with selection replaced by migration.  (In Nagylaki's notation, (9.158), (9.155), and (9.146) are replaced respectively by $p_i^*=p_i+\hat p_i(\textbf{p})m(\textbf{p})-p_i m_i(\textbf{p})$ with $m(\textbf{p})=\sum_j p_j m_j(\textbf{p})$, $\tilde P_{ij}^*=p_i^*p_j^*$, and $P_{ij}^*=(2-\delta_{ij})p_i^*p_j^*$. Finally, our $N$ is Nagylaki's $2N$.)

We turn now to specifying the migration and mutation in sufficient detail to derive a $K$-allele diffusion approximation.
Consider parameter values of $0\le \alpha<1$ and $\theta>-\alpha$ (the case $\theta\ge0$, which allows a simplification, is treated separately in Section \ref{sec:7}). We assume that the migration rates are given by
\begin{equation}\label{migration rates}
m_i(z):=\frac{\alpha r_{i}(z)}{2N},\quad i=1,\ldots,K,
\end{equation} 
and that the mutation rates $u_{ij}$ are given by
\begin{equation}\label{mutation rates}
u_{ij}:=\frac{\theta+\alpha}{2N(K-1)}, \quad j\ne i,
\end{equation} 
for sufficiently large $N$ (cf.~\eqref{mutation probability}). The functions $p_i$ in \eqref{migration} and $r_i$ in \eqref{migration rates}, defined on $\DK$ for $i=1,\ldots,K$, are assumed to satisfy the following properties:  $(p_1,\ldots,p_K)$ is a $C^4$ map of $\DK$ into $\DK$ and is symmetric in the sense that, for every permutation $\sigma$ of $\{1,2,\ldots,K\}$,
$$
p_i(z_{\sigma(1)},\ldots,z_{\sigma(K)})=p_{\sigma(i)}(z),\quad i=1,\ldots,K,\; z\in\DK;
$$
$p_i(z)>p_j(z)$ if $z_i<z_j$ for all $z\in\DK$ and $i\ne j$; $r_i(z)=r(z_i)$ for $i=1,\ldots,K$ and $z\in\DK$, where $r:[0,1]\mapsto[0,\infty)$ is $C^4$ and is decreasing.  

In Section \ref{sec: convergence to full model} we will be more specific as to the form of $p_i(z)$ and $r_i(z)$ (see \eqref{condizioni r} and \eqref{condizioni p} below).  In Section \ref{sec:7} we will give a simpler formulation of $p_i(z)$ and $r_i(z)$ in the special case $\theta\geq 0$.  Here and later, for notational simplicity, we suppress the dependence on $K$ of the defined quantities, whenever this does not create confusion.  

To summarise, our Markov chain $Z_K^N(\cdot)=\{Z_K^N(\tau),\,\tau=0,1,\ldots\}$ has state space $\Delta_K^N$ (see \eqref{DK-discrete}) and its transition probabilities are specified by \eqref{migration}--\eqref{mutation rates}.

From \eqref{migration} and \eqref{mutation}, we can write the frequency of allele $i$ (in the gametic pool) at reproductive age in terms of the allele frequencies before the action of migration and mutation as
\begin{equation}\label{z^*}
z^{**}_{i}=z_{i}+N^{-1}b_i(z)+o(N^{-1}),
\end{equation} 
uniformly in $z\in\Delta_K^N$, where, in view of the rescaling, we have isolated the relevant drift term for the $i$th component, namely
\begin{equation}\label{drift b_i}
\quad b_i(z):=\,\frac{1}{2}\bigg[\frac{\theta+\alpha}{K-1}(1-z_{i})-(\theta+\alpha) z_{i}+\alpha p_i(z) \sum_{j=1}^{K}z_j r_{j}(z) -\alpha z_i r_{i}(z)\bigg].
\end{equation}
For later use, note that 
\begin{equation}\label{boundary condition 1}
b_i(z)\ge0\text{ if }z_i=0,\quad i=1,2,\ldots,K,
\end{equation}
and
\begin{equation}\label{boundary condition 2}
\sum_{i=1}^K b_i(z)=0,\quad z\in\Delta_K.
\end{equation}


\section{Diffusion approximation with \texorpdfstring{$K$}{K} alleles}\label{sec: K-diffusion}

Recall \eqref{DK}, and define the second-order differential operator
\begin{equation}\label{operator A_K}
\A_{K}:=\frac{1}{2}\sum_{i,j=1}^{K}a_{ij}(z)\frac{\partial^{2}}{\partial z_{i}\partial z_{j}}+\sum_{i=1}^{K}b_i(z)\frac{\partial}{\partial z_{i}},\qquad \D(\A_{K})=C^{2}(\Delta_K),
\end{equation}
with
\begin{equation}\label{diff a}
a_{ij}(z):=z_{i}(\delta_{ij}-z_{j})
\end{equation}
and $b_i(z)$, which of course depends on $K$, as in \eqref{drift b_i}. Here
\begin{equation*}
C^{2}(\Delta_K):=\{f\in C(\DK):\  \exists \tilde f\in C^2(\mathds{R}^K)\text{ such that }\tilde f|_{\Delta_K}=f\},
\end{equation*} 
and the choice of the extension $\tilde f$ to which the partial derivatives are applied does not matter.  Let $C(\DK)$ be endowed with the supremum norm.  The following result states that $\A_{K}$ characterises a Feller diffusion on $\DK$.

\begin{proposition}\label{thm: K-diffusion}
Let $\A_K$ be as in \eqref{operator A_K}--\eqref{diff a} and \eqref{drift b_i}. Then the closure in $C(\DK)$ of $\A_K$ is single-valued and generates a Feller semigroup $\{\T_K(t)\}$ on $C(\DK)$. For each $\nu_K\in\mathscr{P}(\Delta_K)$, there exists a strong Markov process $Z_K(\cdot)=\{Z_K(t),\; t\geq 0\}$, with initial distribution $\nu_{K}$, such that
\begin{equation*}
\mathbb{E}(f(Z_K(t+s))\mid Z_K(u),\, u\le s)=\mathcal{T}_{K}(t)f(Z_K(s)),
\quad f\in C(\DK),\; s,t\ge0.
\end{equation*}  
Furthermore, 
\begin{equation*}
\mathbb{P}\{Z_K(\cdot)\in C_{\Delta_K}[0,\infty)\}=1.
\end{equation*} 
\end{proposition}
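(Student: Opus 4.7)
The plan is to verify the hypotheses of the Hille--Yosida theorem for Feller semigroups on the compact space $\DK$: specifically, to show that (i) $\A_K$ satisfies the positive maximum principle on $\D(\A_K) = C^2(\DK)$, and (ii) the range of $\lambda I - \A_K$ is dense in $C(\DK)$ for some (hence all) $\lambda > 0$. From (i) and (ii) it will follow that the closure of $\A_K$ is single-valued, dissipative, and generates a strongly continuous contraction positive semigroup $\{\T_K(t)\}$ on $C(\DK)$ that preserves constants, i.e.\ a Feller semigroup; the strong Markov process $Z_K(\cdot)$ with the stated transition rule then exists by the standard construction. Path continuity is a separate step, obtained by showing, via the purely second-order form of $\A_K$, that $\E_z[|Z_K(t)-z|^4] = O(t^2)$ uniformly in $z \in \DK$, and applying Kolmogorov's continuity criterion.

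For (i), pick $f \in C^2(\DK)$ attaining a nonnegative maximum at $z_0 \in \DK$ and any $C^2$ extension $\tilde f$; note that $\A_K f(z_0)$ does not depend on the extension because $\sum_i a_{ij}(z) = 0$ and $\sum_i b_i(z) = 0$ on $\DK$ make $\A_K$ tangential to $\DK$. If $z_0$ is a relative interior point, then $\nabla \tilde f(z_0)$ is parallel to $(1,\ldots,1)$, so by \eqref{boundary condition 2} the drift contribution vanishes, while $(a_{ij}(z_0))$ is positive semidefinite and annihilates $(1,\ldots,1)$, so it acts only on the tangent space, where $\nabla^2 \tilde f(z_0)$ is negative semidefinite, yielding a nonpositive second-order contribution. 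If $z_{0,i} = 0$ for some $i$, the $i$-th row of $(a_{ij}(z_0))$ vanishes and the Hessian contribution reduces to the tangential directions along the face, while \eqref{boundary condition 1} together with the maximum property gives $b_i(z_0) \partial_i \tilde f(z_0) \le 0$ for each such $i$. Either way $\A_K f(z_0) \le 0$.

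The main obstacle is (ii), because the drift $b_i$ contains the non-polynomial $C^4$ functions $p_i$ and $r_i$; in particular, the classical argument of \cite{EK81}, which exploits invariance of polynomials of bounded degree under a mutation-only operator to produce a convenient core, is not directly available. The strategy is to split $\A_K = \A_K^{\circ} + \A_K^{m}$, where $\A_K^{\circ}$ retains only the polynomial mutation drift $\tfrac12[\tfrac{\theta+\alpha}{K-1}(1-z_i) - (\theta+\alpha)z_i]$ and $\A_K^{m}$ is the first-order migration operator with bounded $C^4$ coefficients on $\DK$. The polynomial-invariance method of \cite{EK81} applies to $\A_K^{\circ}$, yielding a Feller semigroup on $C(\DK)$; the range condition for the full $\A_K$ should then follow by a perturbation argument exploiting the boundedness of the coefficients of $\A_K^m$ on the compact $\DK$. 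An equivalent route is to approximate $\A_K$ by the uniformly elliptic operators $\A_K + \eps \sum_i \partial_i^2$ on $\DK$ with oblique reflecting boundary conditions, solve the associated resolvent equations via the classical PDE theory for non-degenerate operators, and extract a solution of $(\lambda - \A_K) u = g$ (with $\A_K$ interpreted as its closure) by passing to the limit $\eps \downarrow 0$, using uniform estimates supplied by the positive maximum principle.
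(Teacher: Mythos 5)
Your positive-maximum-principle verification and your path-continuity step (quartic moment bound plus Kolmogorov, which parallels the paper's use of the test function $2-\sum_i(z_i-z_i^0)^4$ with Proposition 4.2.9 and Remark 4.2.10 of \cite{EK86}) are fine. The genuine gap is exactly where you locate the difficulty: the range condition. Neither of your two proposed routes, as stated, closes it. In route (a), $\A_K^{m}$ is a first-order \emph{differential} operator; boundedness of its coefficients on the compact set $\DK$ does not make it a bounded perturbation of $\A_K^{\circ}$, and the perturbation theorems you would need require relative boundedness with relative bound strictly less than one with respect to the degenerate operator $\A_K^{\circ}$ in the sup norm --- an estimate of the form $\|\A_K^m f\|\le a\|\A_K^{\circ}f\|+b\|f\|$ with $a<1$ that you neither prove nor is evident (gradients are not controlled by a degenerate second-order operator up to the boundary in $C(\DK)$). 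Note also that the polynomial-invariance trick of \cite{EK81} works only because the mutation drift is affine, so polynomials of bounded degree are invariant; the migration drift, built from $p_i$ and $r_i$, is not even polynomial, so there is no way to patch the argument by approximating within a finite-dimensional invariant space. In route (b), the positive maximum principle supplies only the bound $\|u_\eps\|\le\lambda^{-1}\|g\|$ on solutions of the regularised resolvent equations; to pass to the limit and conclude that $u=\lim u_\eps$ lies in the domain of the closure with $(\lambda-\A_K)u=g$, you need estimates on first and second derivatives of $u_\eps$, uniform in $\eps$ up to the degenerate boundary (and you must also justify that the reflecting approximations do not produce spurious boundary behaviour in the limit). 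Producing such a priori derivative estimates for this degenerate operator is the substantive analytic content, not a routine consequence of the maximum principle.

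This is precisely why the paper does not argue from scratch: it observes that $b_1,\dots,b_K\in C^4(\DK)$, that $b_i(z)\ge 0$ whenever $z_i=0$ (see \eqref{boundary condition 1}), and that $\sum_i b_i\equiv 0$ (see \eqref{boundary condition 2}), and then invokes the generation theorems of \cite{E76} and \cite{S78}, which were developed exactly to handle Wright--Fisher diffusion matrices with general smooth, inward-pointing, non-polynomial drift (Ethier's proof runs through the kind of derivative estimates alluded to above; Sato's through Markov chain approximation). To make your proposal complete you would either have to reprove such a theorem --- i.e.\ carry out the uniform derivative estimates in your route (b) --- or, more economically, verify the hypotheses \eqref{boundary condition 1}, \eqref{boundary condition 2} and the $C^4$ regularity of the $b_i$ and cite those results, which is the paper's proof.
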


\begin{proof}
Noting that $b_1,\ldots,b_K \in C^{4}(\Delta_{K})$, the first assertion follows from \cite{E76} and \cite{S78}, using \eqref{boundary condition 1} and \eqref{boundary condition 2}.  The second assertion follows from Theorem 4.2.7 in \cite{EK86}. Note that for every $z^{0}\in\Delta_K$ and $\eps>0$ there exists $f\in\D(\A_{K})$ such that
\begin{equation*}
\sup_{z\in B(z^{0},\eps)^c}f(z)<f(z^{0})=\|f\|\quad \text{and} \quad \A_{K}f(z^{0})=0,
\end{equation*} 
where $B(z^{0},\eps)$ is the ball of radius $\eps$ centred at $z^{0}$. Take for example $f(z):=2-\sum_{i=1}^{K}(z_{i}-z_i^{0})^{4}$.  Then the third assertion follows from Proposition 4.2.9 and Remark 4.2.10 in \cite{EK86}.
\end{proof}

The diffusion of Proposition \ref{thm: K-diffusion} is a good approximation, in the sense of the limit in distribution as the population size tends to infinity, of a suitably rescaled version of the Wright--Fisher Markov chain described in Section \ref{sec: WF chain}. This is formalised by the next theorem. Here and later $\Rightarrow$ denotes convergence in distribution (or weak convergence) and $D_{\DK}[0,\infty)$ denotes the space of \emph{c\`adl\`ag} sample paths in $\DK$ with the Skorokhod topology.

\begin{theorem}\label{thm: chain convergence}
Let $\{Z_K^N(\tau),\; \tau=0,1,\ldots\}$ be the $\Delta_K^N$-valued Markov chain with one-step transitions as in \eqref{migration}--\eqref{mutation rates}, let $Z_K$ be the Feller diffusion of Proposition \ref{thm: K-diffusion}. If $Z_K^N(0)\Rightarrow Z_K(0)$, then 
\begin{equation*}
Z_K^N(\lfloor N\cdot\rfloor)\Rightarrow Z_K(\cdot)\text{ in }D_{\DK}[0,\infty)
\end{equation*}
as $N\to\infty$.
\end{theorem}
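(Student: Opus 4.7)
The plan is to apply the standard diffusion approximation theorem for Markov chains, namely Theorem 7.4.1 (together with Corollary 7.4.2) of \cite{EK86}, which takes as input (i) well-posedness of the limit martingale problem for $\A_K$ and (ii) convergence of the one-step infinitesimal mean and covariance of the rescaled chain to the drift and diffusion coefficients of $\A_K$, plus a Lindeberg-type moment bound on the increments. Step (i) is in hand: by Proposition \ref{thm: K-diffusion} the closure of $\A_K$ generates a Feller semigroup on $C(\DK)$, so uniqueness of the $(\A_K,\nu_K)$-martingale problem holds.

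For (ii), the conditional mean is immediate from \eqref{z^*}--\eqref{drift b_i}: since $\E[Z_K^N(\tau+1)\mid Z_K^N(\tau)=z]=z^{**}$ by the multinomial sampling rule \eqref{reproduction mechanism}, we have
$$
N\,\E[Z_{K,i}^N(\tau+1)-z_i\mid Z_K^N(\tau)=z]=N(z_i^{**}-z_i)=b_i(z)+o(1),\quad i=1,\ldots,K,
$$
uniformly in $z\in\Delta_K^N$. For the second moment, multinomial sampling gives
$$
\mathrm{Cov}\bigl(Z_{K,i}^N(\tau+1),Z_{K,j}^N(\tau+1)\bigm|Z_K^N(\tau)=z\bigr)=N^{-1}z_i^{**}(\delta_{ij}-z_j^{**}),
$$
and combining with the $O(N^{-1})$ increment of the mean,
$$
N\,\E\bigl[(Z_{K,i}^N(\tau+1)-z_i)(Z_{K,j}^N(\tau+1)-z_j)\bigm|Z_K^N(\tau)=z\bigr]=a_{ij}(z)+o(1),
$$
uniformly in $z$, with $a_{ij}$ as in \eqref{diff a}. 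Finally, the explicit form of the fourth central moments of a multinomial vector yields
$$
N\,\E\bigl[\|Z_K^N(\tau+1)-z\|^{4}\bigm|Z_K^N(\tau)=z\bigr]=O(N^{-1})\to0,
$$
which furnishes the required Lindeberg-type control.

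Taken together, these bounds show that, uniformly on the compact set $\DK$, the discrete generator $N\,\E[f(Z_K^N(\tau+1))-f(Z_K^N(\tau))\mid Z_K^N(\tau)=\cdot\,]$ converges to $\A_K f$ for every $f\in C^2(\DK)$. With initial distributions convergent by hypothesis, Theorem 7.4.1 (or Corollary 7.4.2) of \cite{EK86} then delivers $Z_K^N(\lf N\cdot\rf)\Rightarrow Z_K(\cdot)$ in $D_{\DK}[0,\infty)$. I do not anticipate any substantial obstacle: the state space is compact, the drift $b_i$ is $C^4$ on $\DK$, the covariance $a_{ij}$ is polynomial, and all multinomial moments are explicit; the argument is the $K$-allele analogue of the Wright--Fisher approximation underlying eq.~(2.2) of \cite{EK81}, extended only by the bounded, smooth migration contribution in \eqref{drift b_i}.
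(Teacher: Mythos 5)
Your proposal is correct, and the computational core is the same as the paper's: conditional mean $N(z_i^{**}-z_i)=b_i(z)+o(1)$, conditional covariance $N^{-1}z_i^{**}(\delta_{ij}-z_j^{**})$, and a fourth-moment bound giving the Lindeberg/Dynkin-type control, all uniformly in $z\in\Delta_K^N$. Where you diverge is in the concluding machinery. The paper turns these estimates into the uniform operator convergence $\|N(\mathcal{T}_K^N-I)f-\A_K f\|\to0$ for $f\in C^2(\DK)$ (via a Taylor expansion with an explicit modulus-of-continuity estimate for the remainder) and then invokes the semigroup convergence theorems of \cite{EK86} (Theorems 1.6.5 and 4.2.6), which work directly on $C(\DK)$ using the Feller semigroup and the core $C^2(\DK)$ from Proposition \ref{thm: K-diffusion}. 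You instead invoke the martingale-problem diffusion-approximation theorem (Theorem 7.4.1/Corollary 7.4.2 of \cite{EK86}), whose inputs are only the moment asymptotics, the Lindeberg condition, and well-posedness of the limiting martingale problem; the latter does follow from Proposition \ref{thm: K-diffusion}, since a generator of a Feller semigroup with continuous sample paths yields a well-posed $\DK$-valued martingale problem. The trade-off: your route does not actually need the uniform generator convergence you assert in the last step (the moment conditions suffice), while the paper's route needs it but then gets, in addition, convergence of the associated semigroups. One small point to attend to if you keep the 7.4.1 citation: that theorem is stated for $\mathbb{R}^d$-valued chains and limits, so you should either extend $a_{ij}$ and $b_i$ continuously off $\DK$ and note that all solutions started in $\DK$ remain there (so well-posedness on $\mathbb{R}^K$ reduces to well-posedness on $\DK$), or argue directly with the $\DK$-valued martingale problem; the paper's semigroup argument sidesteps this because everything lives in $C(\DK)$ from the start.
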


\begin{proof}
From \eqref{reproduction mechanism} and \eqref{z^*}, letting $\E_{z}(\cdot):=\E(\cdot\mid z)$ and similarly for $\P_{z}$, we have that
\begin{equation*}
\E_{z}[z_i'-z_i]=\E_{z}[z_{i}'-z_{i}^{**}]+z_{i}^{**}-z_{i}=N^{-1}b_i(z)+o(N^{-1})
\end{equation*}
and
\begin{equation*}
\E_{z}[(z_i'-z_i)(z_j'-z_j)]=N^{-1}z_{i}^{**}(\delta_{ij}-z_{j}^{**})+o(N^{-1})=N^{-1}a_{ij}(z)+o(N^{-1}),
\end{equation*}
uniformly in $z$.  Furthermore, it can be easily seen that $\E_{z}[(z_i'-z_i)^{4}]=o(N^{-1})$, so that  Chebyshev's inequality implies Dynkin's condition for the continuity of paths of the limit process, that is, $\mathbb{P}_{z}(|z_i'-z_i|>\delta)=o(N^{-1})$ for every $\delta>0$.  Again, these estimates are uniform in $z$.  Denote by $\mathcal{T}_K^N$ the semigroup operator associated to the Markov chain $Z_K^N(\cdot)$ and by $I$ the identity operator. Then a Taylor expansion, together with the above expressions, yields, for every $f\in C^{2}(\DK)$,
\begin{align*}
(\mathcal{T}_K^N-I)f(z)
&=\E_{z}[f(z')-f(z)]\\
&=\E_z\bigg[\sum_{i=1}^{K}(z_i'-z_i)f_{z_i}(z)+\frac{1}{2}\sum_{i,j=1}^{K}(z_i'-z_i)(z_j'-z_j)f_{z_iz_j}(z)\\
&\qquad\quad{}+\int_0^1(1-t)\sum_{i,j=1}^K(z_i'-z_i)(z_j'-z_j)\\
&\qquad\qquad\quad \times [f_{z_iz_j}(z+t(z'-z))-f_{z_iz_j}(z)]\,dt\bigg]\\
&=\frac{1}{N}\sum_{i=1}^{K}b_i(z) f_{z_i}(z)+\frac{1}{2N}\sum_{i,j=1}^{K}a_{ij}(z)f_{z_iz_j}(z)+o\bigg(\frac{1}{N}\bigg),
\end{align*}
uniformly in $z$, 
where the $o(N^{-1})$ term above is due to 
\begin{align*}
\bigg|&\E_{z}\bigg[\int_0^1(1-t)\sum_{i,j=1}^K(z_i'-z_i)(z_j'-z_j)[f_{z_iz_j}(z+t(z'-z))-f_{z_iz_j}(z)]\,dt\bigg]\bigg|\\
&\quad{}\le \E_{z}\bigg[\frac12\sum_{i,j=1}^K|z_i'-z_i||z_j'-z_j|\,2\|f_{z_iz_j}\|; |z'-z|>\delta\bigg]\\
&\quad\qquad{}+\E_{z}\bigg[\frac12\sum_{i,j=1}^K|z_i'-z_i||z_j'-z_j|\,\omega(f_{z_iz_j},\delta); |z'-z|\le\delta\bigg]\\
&\quad{}\le \sum_{i,j=1}^K\|f_{z_iz_j}\|\,\P_z(|z'-z|>\delta)\\
&\quad\qquad{}+\frac12\sum_{i,j=1}^K\E_{z}[(z_i'-z_i)^2]^{1/2}\E_{z}[(z_j'-z_j)^2]^{1/2}\,\omega(f_{z_iz_j},\delta)\\
&\quad=o(N^{-1})+O(N^{-1})\max_{i,j}\omega(f_{z_iz_j},\delta),
\end{align*}
$\omega(g,\delta):=\sup_{|z'-z|\le\delta}|g(z')-g(z)|$ being the modulus of continuity of the function $g$.
It follows that, for every $f\in C^{2}(\DK)$,
\begin{equation}\label{WF-conv}
\|N(\mathcal{T}_K^N-I)f-\A_{K}f\|\to0
\end{equation} 
as $N\to\infty$, where $\A_{K}$ is as in \eqref{operator A_K}.  An application of Theorems 1.6.5 and 4.2.6 in \cite{EK86} implies the statement of the theorem.
\end{proof}

Having justified our first limit operation, we now apply the descending order statistics to our limit Wright--Fisher diffusion $Z_K(\cdot)$.  First, we define the continuous map $\rho_{K}:\Delta_K\mapsto \Ni$ by
\begin{equation*}\label{rho-n}
\rho_{K}(z):=(z_{(1)},\ldots,z_{(K)},0,0,\ldots),
\end{equation*} 
where $z_{(1)}\ge z_{(2)}\ge \cdots \ge z_{(K)}$ are the descending order statistics of the coordinates of $z\in\Delta_K$.  We will show in the next section that, with suitable definitions of $p_i(z)$ and $r_i(z)$ and assuming convergence of the initial distributions, $\rho_K(Z_K(\cdot))\Rightarrow Z(\cdot)$ as $K\to\infty$, with $Z(\cdot)$ denoting the two-parameter model in $\Nic$.

Here we simply observe that $\rho_K(Z_K(\cdot))$ is Markovian despite the fact that $\rho_K$ is not one-to-one.  The state space of $\rho_K(Z_K(\cdot))$ is
\begin{equation*}
\NK:=\{z\in\Ni:\; z_{K+1}=0\}\subset\Nic
\end{equation*} 
and its generator $\B_K$ is given by
\begin{equation}\label{operator B_K}
\B_{K}:=\frac{1}{2}\sum_{i,j=1}^{K}a_{ij}(z)\frac{\partial^{2}}{\partial z_{i}\partial z_{j}}+\sum_{i=1}^{K}b_i(z)\frac{\partial}{\partial z_{i}},
\end{equation} 
ostensibly the same as $\A_K$ in \eqref{operator A_K}--\eqref{diff a} and \eqref{drift b_i}, except that now $z\in\NK$ instead of $z\in\DK$.  In addition, 
\begin{equation}\label{domain B_K}
\mathscr{D}(\B_K):=\{f\in C^2(\NK): f\circ\rho_K\in C^2(\DK)\}.
\end{equation}
Hidden in this definition are certain implicit boundary conditions needed to preserve the inequalities $z_1\ge z_2\ge \cdots\ge z_K$ (see \cite{EK81} for more details). The following result generalises Proposition 2.4 of \cite{EK81}.

\begin{proposition}\label{rhoK-ZK}
The closure in $C(\NK)$ of the operator $\B_K$ defined by \eqref{operator B_K}--\eqref{domain B_K}, \eqref{diff a} and \eqref{drift b_i} is single-valued and generates a Feller semigroup $\{\mathcal{U}_K(t)\}$ on $C(\NK)$.  Given $\nu\in\mathscr{P}(\DK)$, let $Z_K(\cdot)$ be as in Proposition \ref{thm: K-diffusion}.  Then $\rho_K(Z_K(\cdot))$ is a strong Markov process corresponding to $\{\mathcal{U}_K(t)\}$ with initial distribution $\nu_K\circ\rho_K^{-1}$ and almost all sample paths in $C_{\NK}[0,\infty)$.
\end{proposition}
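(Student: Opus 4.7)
The plan is to follow the strategy of Proposition 2.4 in \cite{EK81}, which exploits the $S_K$-symmetry of $\A_K$ to project the Feller semigroup $\{\T_K(t)\}$ from $C(\DK)$ down to a Feller semigroup $\{\mathcal{U}_K(t)\}$ on $C(\NK)$, and then to identify its generator with the closure of $\B_K$.

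First I would verify that $\A_K$ commutes with the natural action of the symmetric group $S_K$ on $\DK$ by coordinate permutation. The diffusion coefficients $a_{ij}(z)=z_i(\delta_{ij}-z_j)$ are manifestly equivariant, and the drift coefficients $b_i$ in \eqref{drift b_i} inherit equivariance from the assumed symmetry of $(p_1,\ldots,p_K)$ and the structural form $r_i(z)=r(z_i)$. By uniqueness of the martingale problem for $\A_K$ (Proposition \ref{thm: K-diffusion}), this equivariance lifts to the semigroup: $\T_K(t)(f\circ\Sigma)=(\T_K(t)f)\circ\Sigma$ for every permutation $\Sigma$. Since continuous symmetric functions on $\DK$ are in bijection with $C(\NK)$ via $f\mapsto f\circ\rho_K$, the prescription $(\mathcal{U}_K(t)g)\circ\rho_K:=\T_K(t)(g\circ\rho_K)$ well-defines a strongly continuous, positive, conservative contraction semigroup on $C(\NK)$, inheriting the Feller property from $\{\T_K(t)\}$.

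To identify the generator as $\overline{\B_K}$, I would show that for every $g\in\D(\B_K)$ (that is, for $g\in C^2(\NK)$ with $g\circ\rho_K\in C^2(\DK)$) the identity $\A_K(g\circ\rho_K)=(\B_K g)\circ\rho_K$ holds on $\DK$. On the interior of any Weyl chamber $\{z\in\DK:z_{\sigma(1)}>\cdots>z_{\sigma(K)}\}$ this is a direct chain-rule computation, made possible by the permutation symmetry of the coefficients of $\A_K$; at the diagonal hyperplanes $\{z_{\sigma(i)}=z_{\sigma(i+1)}\}$ it extends by continuity using the matching conditions on one-sided derivatives of $g$ that are implicit in $\D(\B_K)$. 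Together with the standard estimate $\|t^{-1}(\T_K(t)h-h)-\A_K h\|\to 0$ for $h\in C^2(\DK)$, this places $\D(\B_K)$ in the domain of the generator of $\{\mathcal{U}_K(t)\}$ and identifies the generator with $\B_K$ there; the closure is then determined by a standard core argument. The strong Markov property of $\rho_K(Z_K(\cdot))$ with respect to $\{\mathcal{U}_K(t)\}$ follows from the tower property: for $g\in C(\NK)$ and $s,t\ge0$,
$$\E[g(\rho_K(Z_K(t+s)))\mid\mathcal{F}_s^{Z_K}]=\T_K(t)(g\circ\rho_K)(Z_K(s))=(\mathcal{U}_K(t)g)(\rho_K(Z_K(s))),$$
while continuity of $\rho_K$ transfers a.s.\ path continuity from $C_{\DK}[0,\infty)$ to $C_{\NK}[0,\infty)$.

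The hard part will be the boundary analysis at the diagonals, where $\rho_K$ is only Lipschitz. The required $C^2$-regularity of $g\circ\rho_K$ across these hyperplanes forces the non-trivial matching conditions on $g$ mentioned after \eqref{domain B_K}, and one must check that under these conditions the chain-rule computation on each Weyl chamber assembles into $(\B_K g)\circ\rho_K$ globally on $\DK$. This is precisely the delicate step carried out in \cite{EK81}, and the addition of state-dependent migration does not affect it, since the migration contribution enters only through the equivariant first-order drift $b_i$ and therefore does not alter the second-order boundary analysis.
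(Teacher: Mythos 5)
Your proposal is correct and follows essentially the same route as the paper, which simply invokes Proposition 2.4 of \cite{EK81} and highlights exactly the two points you develop: the permutation equivariance $b_i(z_{\sigma(1)},\ldots,z_{\sigma(K)})=b_{\sigma(i)}(z)$ (guaranteed by the symmetry of $(p_1,\ldots,p_K)$ and $r_i(z)=r(z_i)$) and the resulting identity $(\B_K f)\circ\rho_K=\A_K(f\circ\rho_K)$ on $\DK$. Your write-up merely spells out the projection-of-semigroup argument that the paper leaves to the cited reference.
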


\begin{proof}
The proof is exactly as in the cited paper, the key observation being that, for every permutation $\sigma$ of $\{1,2,\ldots,K\}$, 
\begin{equation*}
b_i(z_{\sigma(1)},\ldots,z_{\sigma(K)})=b_{\sigma(i)}(z), \quad z\in\DK,\; i=1,2,\ldots,K.
\end{equation*}
As a byproduct of this, we find that, if $f\in\D(\B_K)$, 
\begin{equation}\label{A vs B}
(\B_K f)\circ\rho_K =\A_K(f\circ\rho_K) \text{ on }\DK.
\end{equation}
\end{proof}


\section{Convergence to the infinite-dimensional diffusion}\label{sec: convergence to full model}

We now turn to our second limit operation, namely the convergence of the reordered Wright--Fisher diffusion $\rho_K(Z_K(\cdot))$ to the two-parameter model, i.e.,  the $\Nic$-valued diffusion process with generator $\B$ introduced in Section \ref{sec: 2parmodel}. To this end we will specify explicitly the functions $p_i$ and $r_i$ that determine the migration mechanism and provide some probabilistic interpretation of our choice, but the results of this section hold more generally (see Remark \ref{gen-immig}).

The drift coefficients of $\B$ are $-\frac{1}{2}(\theta z_i+\alpha)$, which we rewrite as in \eqref{limit drift}, while those of $\B_K$ are given by \eqref{drift b_i}.  In view of the comments at the end of Section \ref{sec: 2parmodel},  the functions $p_i$ and $r_i$ should satisfy
\begin{equation}\label{alpha drift condition}
-\frac{1}{2}\alpha(1-z_i)=\lim_{K\to\infty}\frac{1}{2} \bigg[\alpha p_i(z) \sum_{j=1}^{K}z_j r_{j}(z) - \alpha z_i r_{i}(z)\bigg].
\end{equation}
One way to achieve this is to take $r_i(z)=(1-z_i)/z_i$ and $p_i(z)=o(1/K)$.  However, this is problematic for two reasons. First, $r_i$ is unbounded; second, requiring $p_i(z)=o(1/K)$ uniformly in $i$ and $z$ is inconsistent with $\sum_{i=1}^K p_i(z)=1$.  We can address both issues by instead defining
\begin{equation}\label{condizioni r}
r_{i}(z):=\begin{cases}(1-z_i)[1-(1-z_{i})^K]/z_i&\text{if $z_i>0$}\\K&\text{if $z_i=0$}\end{cases}
\end{equation} 
and 
\begin{equation}\label{condizioni p}
p_{i}(z):=\frac{(1-z_{i})^K}{\sum\nolimits_{l=1}^{K}(1-z_l)^K}.
\end{equation} 

An alternative formulation is in terms of the following system of Bernoulli trials parameterised by the current state $z$.  Let the  array $\zeta=(\zeta_{ij})_{i,j=1,\ldots,K}$ be such that, along row $i$, $\zeta_{i1},\ldots,\zeta_{iK}$ are i.i.d.\ Bernoulli($z_i$), for $i=1,2,\ldots,K$.  With $G_i$ being the number of failures in row $i$ before the first success, 
$$
r_i(z)=\sum_{k=1}^K(1-z_i)^k=\sum_{k=1}^K\P(G_i\ge k)=\E[G_i].
$$
Furthermore, $p_i(z)$ is proportional to the probability of observing no successes in row $i$.  Incidentally, $p_i(z)$ has also a direct probabilistic interpretation via Bayes's theorem.  Let $I$ be a row of the array chosen uniformly at random.  Then $p_i(z)$ is the probability of choosing row $i$ given that we observe all failures along the row, that is,
\begin{equation*}
p_i(z)=\P\bigg\{I=i\;\bigg|\,\sum_{j=1}^{K} \zeta_{Ij}=0\bigg\}.
\end{equation*}

Let $Z$ be the two-parameter model. In order to prove that $\rho_K(Z_K(\cdot))\Rightarrow Z(\cdot )$ as $K\to\infty$, the usual argument is to show that
\begin{equation}\label{conv of generators}
\|\B_K\eta_K\varphi -\eta_K\B\varphi\|\to 0 \text{ as }K\to\infty,
\end{equation}
where $\eta_K:C(\Nic)\mapsto C(\NK)$ is given by the restriction $\eta_K\varphi =\varphi |_{\NK}$, and $\varphi\in\D(\B)$ is given by
\[
\varphi =\varphi_{m_1}\cdots\varphi_{m_l},\qquad m_1,\ldots ,m_l\in \{2,3,\ldots \},\quad l\in\N.
\]
(Notice that $\eta_K$ maps $\D(\B)$ into $\D(\B_K)$.)

Unfortunately, despite the fact that \eqref{alpha drift condition} holds with this choice, \eqref{conv of generators} fails if one or more of the subscripts $m_1,\ldots ,m_l$ is equal to 2. Similarly to what was done in \cite{EK81} in the proof of Theorem 2.6, we can enlarge the domain of $\B$ to the algebra generated by 1 and the functions $\varphi_m$ defined by $\eqref{varphi_m}$ for all real $m\ge 2$ (not just integers).  Then \eqref{conv of generators} holds for 
\[
\varphi =\varphi_{m_1}\cdots\varphi_{m_l},\qquad m_1,\ldots,m_l>2,\quad l\in \N.
\]
For example, if $\varphi =\varphi_{2+\eps}$ for $0<\eps<1$, then $\|\B_K\eta_K\varphi -\eta_K \B\varphi\|=O(K^{-\eps})$.  This would suffice if we could show that 
\begin{equation*}\label{domain B0}
\D_0(\B):=\text{subalgebra of }C(\Nic)\text{ generated by }1\text{ and }\varphi_m,\; m\in (2,\infty),
\end{equation*}
is a core for the closure of $\B$ (cf.~\cite{EK86}, Section 1.3). This also appears to fail.  In fact, this algebra is not even a core in the bounded-pointwise sense, as 
\[
\bplim_{\eps\to0+}\B\varphi_{2+\eps}(z)=(1-\alpha)\sum_{i=1}^\infty z_i-(1+\theta)\varphi_2(z),
\]
which is not equal to \eqref{Bvarphi2} except on $\Ni$. 

As mentioned in Section \ref{sec: 2parmodel}, recently \cite{E14} proved that, for any initial distribution $\nu$ concentrated on $\Ni$, the paths of $Z$ belong to $C_{\Ni}[0,\infty )$ with probability one. In view of this result and of the above discussion, one might think of taking $\Ni$ as state space, rather than $\Nic$. But $\Ni$ is not compact, therefore the usual sufficient conditions for convergence in distribution include, besides \eqref{conv of generators}, the following compact containment condition: For every $\eps ,T>0$ there exists a compact set $\Gamma_{\eps ,T}$ such that 
\begin{equation}\label{compact-containment}
\inf_K\P\left(\rho_K(Z_K(t))\in\Gamma_{\eps,T},\;\forall t\leq T\right)\geq 1-\eps.
\end{equation}
Notice that, since $\Ni$ is not a complete metric space, convergence might hold without the compact containment condition (see, for example, \cite{B68}, Theorems 6.1 and 6.2).  In any case, \eqref{compact-containment} is not easy to prove and we have not pursued this approach. 

A further alternative strategy would be to show that, for every $\psi\in\D(\B)$, there exists a sequence $\{\psi_K\}\subset\D_0(\B)$ such that 
\begin{equation*}
\|\eta_K\psi_K-\eta_K\psi\|\to 0\quad\text{{\rm and}}\quad\|\B_K\eta_K\psi_K-\eta_K\B\psi\|\to 0,
\end{equation*}
as $K\to\infty$, so that $\{(\psi ,\B\psi ):\,\psi\in\D(\B)\}$ belongs to the extended limit of $\B_K$ (cf.~Definition 1.4.3 of \cite{EK86}). Then Theorem 1.6.1 of \cite{EK86} would yield 
\begin{equation*}\label{eq:Bn-convergence}
\|\mathcal{T}_K(t)\eta_K\varphi -\eta_K\mathcal{T}(t)\varphi\|\to 0,\quad\varphi\in C(\Nic),\; t\ge0,
\end{equation*}
where $\{\mathcal{T}(t)\}$ is the Feller semigroup on $C(\Nic)$ whose generator is the closure of $\B$.  Even this strategy seems not to be viable.  

Having considered each of these routes, we have turned to the martingale problem approach. In this approach, $\rho_K(Z_K(\cdot ))$ is viewed as a solution to the martingale problem for $\B_K$ (in fact the unique solution).  The usual procedure consists of three steps: (i) Show that $\{\rho_K(Z_K(\cdot))\}$ is relatively compact.  (ii) Show that each of its limit points is a solution to the martingale problem for $\B$.  (iii) Show that the martingale problem for $\B$ has a unique solution. 

However in the present setup it is not clear how to carry out the second and third steps. In fact, if $\D(\B)$ is taken as the domain of $\B$, then it is not clear that the limit martingale relation will hold for $\varphi_2$ (and any product in which $\varphi_2$ is a factor) because $\|\B_K\eta_K\varphi_2-\eta_K\B\varphi_2 \|$ does not converge to zero, as outlined above.  On the other hand, if $\D_0(\B)$ is taken as the domain of $\B$, then the martingale problem for $\B$ may have more than one solution.  For instance, if the initial distribution is the unit mass at $z=0$, then the identically zero stochastic process is a solution. 

We solve these problems by proving \textit{a priori} that, for any limit point $Z$ of $\{\rho_K(Z_K(\cdot))\}$, with probability one, $Z(t)\in\Ni$ for almost all $t\geq 0$. This is done in Lemma \ref{compcont} below. On $\Ni$, $\B\varphi_2$ can be approximated by $\B\varphi_{2+\eps}$, for $\eps\to 0+$, and this yields that the limit martingale relation, which holds for functions in $\D_0(\B)$, carries over to all functions in $\D(\B)$, and thus that the limit martingale problem has a unique solution (Theorem \ref{conv}). 

\begin{lemma}\label{relcomp}
$\{\rho_K(Z_K(\cdot))\}$ is relatively compact in $D_{\Nic}[0,\infty)$.
\end{lemma}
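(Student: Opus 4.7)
My approach exploits the compactness of $\Nic$ in the product topology on $[0,1]^\infty$, which makes the compact containment condition automatic. By a standard criterion (cf.\ \cite{EK86}, Theorem 3.9.1, in the compact state-space setting), $\{\rho_K(Z_K(\cdot))\}$ is relatively compact in $D_{\Nic}[0,\infty)$ provided that $\{\varphi\circ\rho_K(Z_K(\cdot))\}_K$ is relatively compact in $D_{\mathbb{R}}[0,\infty)$ for every $\varphi$ in a dense subset of $C(\Nic)$. I would take this dense subset to be $\D(\B)$ itself: by Stone--Weierstrass, $\D(\B)$ is dense in $C(\Nic)$ since it contains the constants and the generators $\{\varphi_m\}_{m\ge 2}$ separate points of $\Nic$ (distinct $z,z'\in\Nic$ produce distinct atomic measures $\mu_z:=\sum_i z_i\delta_{z_i}$ on $[0,1]$, whose $(k-1)$-th moment equals $\varphi_k(z)$).

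For each $\varphi\in\D(\B)$, Proposition \ref{rhoK-ZK} together with the fact that $\eta_K\varphi\in\D(\B_K)$ yields the semimartingale decomposition
\begin{equation*}
\varphi(\rho_K(Z_K(t)))=\varphi(\rho_K(Z_K(0)))+\int_0^t\B_K\varphi(\rho_K(Z_K(s)))\,ds+M_K^\varphi(t),
\end{equation*}
with $M_K^\varphi$ a martingale of predictable quadratic variation $\int_0^t\Gamma_K(\varphi,\varphi)(\rho_K(Z_K(s)))\,ds$, where $\Gamma_K(\varphi,\psi):=\sum_{i,j=1}^K a_{ij}(z)\,\partial_i\varphi\,\partial_j\psi$. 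The plan is then to establish the uniform bounds $\sup_K\|\B_K\varphi\|_\infty<\infty$ and $\sup_K\|\Gamma_K(\varphi,\varphi)\|_\infty<\infty$. From these, the integrated drift is Lipschitz in $t$ with a constant independent of $K$, and Doob's maximal inequality controls the martingale, yielding tightness of each real-valued family via Aldous' criterion.

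The crux is verifying these uniform bounds for $\varphi=\varphi_m$; products of $\varphi_m$'s then follow via the Leibniz rule $\B_K(\varphi\psi)=\varphi\B_K\psi+\psi\B_K\varphi+\Gamma_K(\varphi,\psi)$, noting that all $\varphi_m$ are themselves bounded by $1$. The diffusion part of $\B_K\varphi_m$, namely $\tfrac12 m(m-1)\sum_i z_i^{m-1}(1-z_i)$, is trivially bounded by $\tfrac12 m(m-1)$, and a direct computation gives $\Gamma_K(\varphi_m,\varphi_n)(z)=mn[\varphi_{m+n-1}(z)-\varphi_m(z)\varphi_n(z)]$, hence $|\Gamma_K(\varphi_m,\varphi_m)|\le m^2$ uniformly in $K$. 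The delicate piece is the migration drift $\alpha m\sum_i z_i^{m-1}\bigl[p_i(z)\sum_j z_jr_j(z)-z_ir_i(z)\bigr]$. Using \eqref{condizioni r}, one obtains $\sum_j z_jr_j(z)=(K-1)-\sum_j(1-z_j)^{K+1}=O(K)$, while Jensen's inequality applied to $x\mapsto x^K$ yields $\sum_l(1-z_l)^K\ge K(1-1/K)^K$, also of order $K$; hence $\sum_i z_i^{m-1}p_i(z)\le\varphi_{m-1}(z)/\sum_l(1-z_l)^K=O(1/K)$, and the product of the two factors is $O(1)$. The residual contribution $\alpha m\sum_i z_i^{m-1}(1-z_i)\bigl[1-(1-z_i)^K\bigr]$ is bounded by $\alpha m\varphi_{m-1}(z)\le\alpha m$.

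The main obstacle is thus the uniform bound on the migration-induced drift, which hinges on a delicate cancellation between $r_i(z)$ (of order $K$) and $p_i(z)$ (of order $1/K$ on average), built in by the specific choices \eqref{condizioni r}--\eqref{condizioni p}; such boundedness would not hold for arbitrary $p_i,r_i$ satisfying only \eqref{alpha drift condition}. Once the uniform bounds are in place, the real-valued tightness together with the reduction to a compact state space concludes the proof.
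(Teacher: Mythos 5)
Your proposal is correct and follows essentially the same route as the paper: reduce via compactness of $\Nic$ and density of $\D(\B)$ to a uniform-in-$K$ sup-norm bound on $\B_K\eta_K\varphi_m$, whose key point is exactly the $O(K)\times O(1/K)$ cancellation between $\sum_j z_jr_j(z)$ and $p_i(z)$ (the paper bounds $\sum_l(1-z_l)^K\ge (K/2)e^{-2}$ using $z_l\le 1/l$, while your Jensen bound $\ge K(1-1/K)^K$ serves the same purpose), with products handled by the same Leibniz identity. The only cosmetic difference is that you make the martingale/quadratic-variation bounds and Aldous' criterion explicit, whereas the paper cites Theorems 3.9.1 and 3.9.4 of \cite{EK86}, which encapsulate that argument.
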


\begin{proof}
By Proposition \ref{rhoK-ZK}, $\rho_K(Z_K(\cdot))$ is a strong Markov process with generator the closure of $\B_K$ and sample paths in $C_{\NK}[0,\infty )$. Therefore $\rho_K(Z_K(\cdot))$ is a solution of the martingale problem for $\B_K$ (see, e.g., Proposition 4.1.7 in \cite{EK86}). 

We have, for $m,K\in \{2,3,\ldots\}$ and $z\in\NK$, 
\begin{align}\label{BKphi}
&\B_K\eta_K\varphi_m(z)\nonumber\\
&\quad{}=\binom{m}{2}(\varphi_{m-1}-\varphi_m)(z)\nonumber\\
&\qquad{}+\frac m2\bigg\{\frac {\theta +\alpha}{K-1}(\varphi_{m-1}-\varphi_m)(z)-\theta\varphi_m(z)\nonumber\\
&\qquad{}+\alpha\sum_{j=1}^K z_j r_j(z)\sum_{i=1}^Kp_i(z)z_i^{m-1}-\alpha\sum_{i=1}^K[z_i+z_i r_i(z)]z_i^{m-1}\bigg\}\nonumber\\
&\quad{}=\binom{m}{2}(\varphi_{m-1}-\varphi_m)(z)\nonumber\\
&\qquad{}+\frac m2\frac {\theta +\alpha}{K-1}(\varphi_{m-1}-\varphi_m)(z)-\frac m2(\theta\varphi_m+\alpha\varphi_{m-1})(z)\\
&\qquad{}+\frac m2\alpha\bigg\{\sum_{i=1}^K[1-z_i-z_i r_i(z)]z_i^{m-1}+\sum_{j=1}^K z_j r_j(z)\sum_{i=1}^Kp_i(z)z_i^{m-1} \bigg\}\nonumber\\
&\quad{}=\binom{m}{2}(\varphi_{m-1}-\varphi_m)(z)\nonumber\\
&\qquad{}+\frac m2\frac {\theta +\alpha}{K-1}(\varphi_{m-1}-\varphi_m)(z)-\frac m2(\theta\varphi_m+\alpha\varphi_{m-1})(z)\nonumber\\
&\qquad{}+\frac m2 \alpha\sum_{i=1}^K(1-z_i)^{K+1}z_i^{m-1}+\frac m2 \alpha\sum_{j=1}^K z_j r_j(z)\sum_{i=1}^Kp_i(z)z_i^{m-1}\nonumber\\
&\quad{}=\binom{m}{2}(\varphi_{m-1}-\varphi_m)(z)\nonumber\\
&\qquad{}+\frac m2\frac {\theta +\alpha}{K-1}(\varphi_{m-1}-\varphi_m)(z)-\frac m2(\theta\varphi_m+\alpha\varphi_{m-1})(z)\nonumber\\
&\qquad{}+\frac m2 \alpha\sum_{i=1}^K(1-z_i)^{K}z_i^{m-1}\bigg(1-z_i+\frac{\sum_{j=1}^K z_j r_j(z)}{\sum_{l=1}^K(1-z_l)^K}\bigg),\nonumber
\end{align}
where the third equality uses \eqref{condizioni r} and the fourth uses \eqref{condizioni p}.

Now, since $z_i\le1/i$ for $i=1,\ldots,K$, we have 
\begin{align*}
\sum_{l=1}^K(1-z_l)^K&\geq\sum_{l=\lfloor K/2\rfloor+1}^K(1-z_l)^K\\
&\geq\lceil K/2\rceil\bigg(1-\frac{1}{\lfloor K/2\rfloor+1}\bigg)^K\\
&\geq (K/2)e^{-2}, 
\end{align*}
so that 
\begin{equation}\label{fact2}
\frac {\sum\nolimits_{j=1}^K z_j r_j(z)}{\sum\nolimits_{l=1}^K(1-z_l)^K}\leq\frac{K}{(K/2)e^{-2}}=2e^2.
\end{equation}
In addition,  
\begin{equation}\label{fact1}
\sum_{i=1}^K(1-z_i)^Kz_i^{m-1}\leq K\sup_{0\leq u\leq 1}(1-u)^Ku^{m-1}\leq K\bigg(\frac{m-1}{K+m-1}\bigg)^{m-1}.
\end{equation}
Therefore, for each integer $m\geq 2$, 
\begin{equation}\label{ubdd-gen}
\sup_K\|\B_K\eta_K\varphi_m\|\leq C(\alpha ,\theta ,\varphi_m).
\end{equation}
For $\varphi ,\psi\in\D(\B)$, we can use the analogue of the first equation in (2.13) of \cite{EK81}, namely
\begin{equation}
\B_K\eta_K(\varphi\psi )=(\eta_K\psi)\B_K\eta_K\varphi+(\eta_K\varphi)\B_K\eta_K\psi+\langle\grad(\eta_K\varphi),a\grad(\eta_K\psi)\rangle,\label{BKind}
\end{equation} 
where $a$ is given by \eqref{diff a}, to obtain 
\[
\|B_K\eta_K(\varphi\psi )\|\le\|\psi\| \|B_K\eta_K\varphi\|+\|\varphi\| \|B_K\eta_K\psi\|+2\sup_{i\ge1}\|\varphi_{z_i}\| \sup_{j\ge1}\|\psi_{z_j}\|.
\]
Then we can see, by induction on $l$, that \eqref{ubdd-gen} holds with $\varphi_m$ replaced by $\varphi$ of the form $\varphi =\varphi_{m_1}\varphi_{m_2}\cdots\varphi_{m_l}$, $m_1,\ldots ,m_l\in \{2,3,\ldots\}$, $l\in\N$, hence 
for every $\varphi\in \D(\B)$. 

Since $\D(\B)$ is dense in $C(\Nic)$, the lemma follows from Theorems 3.9.1 and 3.9.4 of \cite{EK86}. 
\end{proof}

\begin{lemma}\label{apriori}
For $2<m<3$ and $K\ge2$, 
\begin{align*}
\B_K\eta_K(\varphi_2-\varphi_m)
&\ge1-\alpha-\frac {m(m-1-\alpha )}2\varphi_{m-1}\\
&\quad{}-\bigg[(1+\theta )\varphi_2-\frac {m(m-1+\theta )}2\varphi_m\bigg]\\
&\quad{}-\bigg[\frac {3(\theta+\alpha)}{2(K-1)}+\frac {\alpha(1+2e^2)}{2(K+1)}\bigg]\quad\text{on }\nabla_K.
\end{align*} 
\end{lemma}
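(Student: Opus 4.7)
The plan is to start from the explicit expansion of $\B_K\eta_K\varphi_m(z)$ computed in the proof of Lemma~\ref{relcomp} (the final display \eqref{BKphi}), and to specialise it first to $m=2$ (using $\varphi_1\equiv 1$ on $\NK$) and then to $m\in(2,3)$. Upon subtracting, the $\binom{m}{2}$, $\theta$ and $\alpha$ contributions recombine into precisely the ``target'' expression $1-\alpha-\tfrac12 m(m-1-\alpha)\varphi_{m-1}-[(1+\theta)\varphi_2-\tfrac12 m(m-1+\theta)\varphi_m]$, and two groups of error terms remain: an $O((K-1)^{-1})$ mutation-type piece
\[
E_1(z):=\frac{\theta+\alpha}{K-1}(1-\varphi_2(z))-\frac{m(\theta+\alpha)}{2(K-1)}(\varphi_{m-1}(z)-\varphi_m(z)),
\]
and a migration piece
\[
E_2(z):=\alpha\sum_{i=1}^K(1-z_i)^K\bigg(1-z_i+\frac{\sum_{j=1}^K z_j r_j(z)}{\sum_{l=1}^K(1-z_l)^K}\bigg)\bigg(z_i-\frac{m}{2}z_i^{m-1}\bigg).
\]
It will then suffice to prove $E_1(z)\ge -3(\theta+\alpha)/(2(K-1))$ and $E_2(z)\ge -\alpha(1+2e^2)/(2(K+1))$ uniformly on $\NK$.

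Bounding $E_1$ is immediate: since $\theta+\alpha>0$ and $\varphi_2\le\varphi_1=1$ on $\NK$, the first summand of $E_1$ is nonnegative, while the second is bounded below using $0\le\varphi_{m-1}-\varphi_m\le\varphi_1=1$ (because $z_i\in[0,1]$) together with $m<3$.

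The delicate step is the bound on $E_2$. Writing $z_i-\tfrac m2 z_i^{m-1}=-z_i(\tfrac m2 z_i^{m-2}-1)$, the bracket flips sign at $c_m:=(2/m)^{1/(m-2)}$, being nonnegative exactly on $\{z_i\ge c_m\}$; contributions to $-E_2$ from indices with $z_i\le c_m$ are nonnegative and may be dropped in an upper estimate of $-E_2$. An elementary calculation (equivalent to $m<2^{m-1}$ for $m>2$) shows that $c_m>1/2$ throughout $m\in(2,3)$. Since the coordinates of $z\in\NK$ sum to one, this forces at most one index $i^*$ with $z_{i^*}>c_m>1/2$. For that solitary term I would bound the migration factor by $1+2e^2$ via \eqref{fact2}, estimate $\tfrac m2 z_{i^*}^{m-2}-1\le\tfrac m2-1<\tfrac12$ using $z_{i^*}\le 1$ and $m<3$, and apply the elementary inequality $(1-u)^K u\le 1/(K+1)$ on $[0,1]$ (maximiser at $u=1/(K+1)$). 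Combining the three estimates yields $-E_2(z)\le\alpha(1+2e^2)/(2(K+1))$, as required.

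I expect the hard part to be precisely this $E_2$ estimate: a naive coordinate-by-coordinate bound on $\sum_i(1-z_i)^K z_i$ gives only $O(K/(K+1))$, off by a factor of $K$. It is the structural observation $c_m>1/2$ — which depends crucially on the hypothesis $m\in(2,3)$ rather than general $m\ge 2$ — that collapses the sum to a single coordinate and recovers the correct $O(1/(K+1))$ rate.
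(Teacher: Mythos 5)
Your proposal is correct and follows essentially the same route as the paper: your $E_1+E_2$ decomposition coincides with the paper's (indeed $E_2=\alpha R_{K,m}$, since $1-z_i-z_ir_i(z)=(1-z_i)^{K+1}$), and your key step --- that for $2<m<3$ only the single coordinate exceeding $1/2$ can make $z_i-\tfrac m2 z_i^{m-1}$ negative, which you then bound using the factor $\tfrac12$, the estimate \eqref{fact2}, and $(1-u)^Ku\le 1/(K+1)$ --- is exactly the paper's treatment of the $i=1$ term (it invokes $z_i\le 1/i$ on $\nabla_K$), with identical constants. The only blemish is a harmless wording slip: the contributions to $-E_2$ from indices with $z_i\le c_m$ are nonpositive (equivalently, their contributions to $E_2$ are nonnegative), which is precisely why they may be dropped when bounding $-E_2$ from above.
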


\begin{proof}
Let $2<m<3$ and $K\geq 2$. We have, on $\nabla_K$, 
\begin{align*}
&\B_K\eta_K(\varphi_2-\varphi_m)\\
&\quad{}=1-\alpha-\frac {m(m-1-\alpha )}2\varphi_{m-1}\\
&\qquad{}-\bigg[(1+\theta )\varphi_2-\frac {m(m-1+\theta )}2\varphi_m\bigg]\\
&\qquad{}+\frac {\theta +\alpha}{K-1}\bigg[1-\frac m2\varphi_{m-1}\bigg]-\frac {\theta +\alpha}{K-1}\bigg[\varphi_2-\frac m2\varphi_m\bigg]+\alpha R_{K,m},
\end{align*}
where
\begin{align*}
R_{K,m}(z)&:=\sum_{i=1}^K[1-z_i-z_i r_i(z)]z_i\Big(1-\frac m2 z_i^{m-2}\Big)\\
&\qquad{}+\sum_{j=1}^K z_j r_j(z)\sum_{i=1}^Kp_i(z)z_i\Big(1-\frac m2 z_i^{m-2}\Big).
\end{align*}
Since $z_i\le 1/i$ for $i=1,\ldots,K$, we obtain the inequalities  
\[
1-\frac m2z_i^{m-2}\geq 0,\quad i\geq 2,\qquad 1-\frac m2 z_1^{m-2}\geq -\frac 12,
\]
and hence
\[
R_{K,m}(z)\geq -\frac 12\bigg[(1-z_1)^{K+1}z_1+\sum_{j=1}^K z_j r_j(z)p_1(z)z_1\bigg].
\]
In addition, by \eqref{condizioni p} and \eqref{fact2}, 
\[
\sum_{j=1}^K z_j r_j(z)p_1(z)=\frac {\sum_{j=1}^K z_j r_j(z)}{\sum_{l=1}^K(1-z_l)^K}(1-z_1)^K\leq 2e^2(1-z_1)^K.
\]
Then, by the second inequality in \eqref{fact1}, we get $R_{K,m}(z)\geq -(1+2e^2)/(2(K+1))$.  Notice also that $1-\frac m2\varphi_{m-1}(z)\geq -\frac 12$ and that $\varphi_2(z)-\frac m2\varphi_m(z)\leq 1$.   The conclusion follows.
\end{proof}

\begin{lemma}\label{compcont}
For every limit point $Z$ of $\{\rho_K(Z_K(\cdot))\}$ in $D_{\Nic}[0,\infty )$, we have
\[
\e\bigg[\int_0^\infty\bigg(1-\sum_{i=1}^{\infty}Z_i(t)\bigg)\,dt\bigg]=0.
\]
\end{lemma}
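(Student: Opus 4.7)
The plan is to apply the martingale identity for $\rho_K(Z_K(\cdot))$ to the test function $\eta_K(\varphi_2-\varphi_m)$ with $2<m<3$, use the one-sided lower bound of Lemma \ref{apriori}, and then pass to two successive limits in the correct order: first $K\to\infty$ along a subsequence on which $\rho_{K_n}(Z_{K_n}(\cdot))\Rightarrow Z(\cdot)$, and then $m\downarrow 2$. The motivation for this particular test function is an algebraic cancellation: as $m\downarrow 2$ the coefficient of $\varphi_2$ in the lower bound of Lemma \ref{apriori} vanishes while the coefficient of $\varphi_{m-1}$ tends to $(1-\alpha)$, so the bound collapses into a statement about $\sum_i z_i$, precisely the mass-loss quantity to be controlled.

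Concretely, since $\rho_K(Z_K(\cdot))$ solves the $\B_K$-martingale problem and $\eta_K(\varphi_2-\varphi_m)\in[0,1]$ on $\Nic$ (because $z_i^m\le z_i^2$ whenever $z_i\in[0,1]$ and $m\ge 2$), taking expectations at a fixed $t>0$ and inserting Lemma \ref{apriori} yields
\[
\mathbb{E}[(\varphi_2-\varphi_m)(\rho_K(Z_K(t)))]-\mathbb{E}[(\varphi_2-\varphi_m)(\rho_K(Z_K(0)))]\;\ge\;\int_0^t\mathbb{E}[F_m(\rho_K(Z_K(s)))]\,ds-C(K)\,t,
\]
where $F_m$ denotes the first three lines of the right-hand side of Lemma \ref{apriori} and $C(K)\to 0$. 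For each $j\ge 2$, the function $\varphi_j$ is continuous on $\Nic$ in the product topology (the tail $\sum_{i>N}z_i^j$ is uniformly controlled by $z_{N+1}\le 1/(N+1)$), so $F_m$ is bounded and continuous on $\Nic$. Standard properties of Skorokhod convergence give $\rho_{K_n}(Z_{K_n}(s))\Rightarrow Z(s)$ for all but countably many $s$, and bounded convergence then passes the limit $K_n\to\infty$ through both the expectation and the $ds$-integral.

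Next, let $m\downarrow 2$. On $\Nic$ one has $\varphi_m\to\varphi_2$ and $\varphi_{m-1}\to\sum_i z_i$ pointwise and boundedly by dominated convergence; the $\varphi_2$- and $\varphi_m$-terms of $F_m$ cancel exactly at $m=2$, leaving the pointwise limit $(1-\alpha)(1-\sum_i z_i)$. Simultaneously, the two boundary terms on the left both tend to $0$ by bounded convergence, since $\varphi_2-\varphi_m\to 0$ pointwise on $\Nic$. The resulting inequality is
\[
0\;\ge\;\int_0^t(1-\alpha)\,\mathbb{E}\bigg[1-\sum_{i=1}^\infty Z_i(s)\bigg]\,ds;
\]
because $\alpha<1$ and the integrand is non-negative, $\mathbb{E}[1-\sum_i Z_i(s)]=0$ for almost every $s\in[0,t]$. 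Letting $t\to\infty$ and applying Fubini (non-negative integrand) yields the stated identity.

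The main obstacle is precisely the order of the double limit. Taking $K\to\infty$ first requires continuity of $F_m$ on $\Nic$ and a true martingale, the latter guaranteed by the uniform bound $\sup_K\|\B_K\eta_K\varphi_m\|\le C$ from the proof of Lemma \ref{relcomp}; taking $m\downarrow 2$ only afterwards produces the cancellation. Reversing the order would resurrect the obstruction that motivates this lemma in the first place, namely that $\B_K\eta_K\varphi_2$ fails to converge to $\B\varphi_2$ on $\Nic$, whereas $F_m\to F_2$ is perfectly well behaved once $K$ has already been sent to infinity.
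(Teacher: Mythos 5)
Your proposal is correct and follows essentially the same route as the paper: apply the $\B_K$-martingale relation to $\eta_K(\varphi_2-\varphi_m)$ with $2<m<3$, insert the lower bound of Lemma \ref{apriori}, pass to the limit $K\to\infty$ along the convergent subsequence (using boundedness and continuity of $\varphi_2,\varphi_m,\varphi_{m-1}$ on $\Nic$), and only then let $m\downarrow 2$ so that the $\varphi_2$ and $\varphi_m$ contributions cancel and the surviving term is $(1-\alpha)\,\e[\int_0^T(1-\sum_i Z_i(t))\,dt]\le 0$. The order of the two limits, which you correctly emphasize, is exactly the point of the paper's argument as well.
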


\begin{proof}\ The proof is inspired by the first part of the proof of Theorem 2.6 in \cite{EK81}.  As $\rho_K(Z_K(\cdot))$ is a solution of the martingale problem for $\B_K$, Lemma \ref{apriori} implies that, for $2<m<3$ and $K\ge2$,
\begin{align}\label{Kmg-ineq}
\begin{split}
&\e\,\left[(\varphi_2-\varphi_m)(\rho_K(Z_K(T)))\right]\\
&\quad{}\geq\e\left[(\varphi_2-\varphi_m)(\rho_K(Z_K(0)))\right]\\
&\qquad{}+\e\left[\int_0^T\left(1-\alpha-\frac {m(m-1-\alpha )}2\varphi_{m-1}(\rho_K(Z_K(t)))\right) dt\right]\\
&\qquad{}-\e\left[\int_0^T\left((1+\theta )\varphi_2-\frac {m(m-1+\theta )}2\varphi_m\right)(\rho_K(Z_K(t)))\,dt\right]\\
&\;\qquad{}-\left[\frac {3(\theta +\alpha)}{2(K-1)}+\frac {\alpha (1+2e^2)}{2(K+1)}\right]T.
\end{split}
\end{align}
Let $Z$ be the limit in distribution of some subsequence $\{\rho_{K_{h}}(Z_{K_h})\}$. Since $\varphi_2$, $\varphi_m$, $\varphi_{m-1}$ are continuous and all 
integrands are bounded, by taking the limit as $h\to\infty$ along the subsequence $\{K_h\}$ in \eqref{Kmg-ineq}, we obtain 
\begin{eqnarray}\label{mg-ineq}
&&(1-\alpha )\e\bigg[\int_0^T\bigg(1-\frac {m(m-1-\alpha )}{2(1-\alpha)}\varphi_{m-1}(Z(t))\bigg)dt\bigg]\nonumber\\
&&\qquad{}\leq\e[(\varphi_2-\varphi_m)(Z(T))]-\e[(\varphi_2-\varphi_m)(Z(0))]\\
&&\qquad\qquad{}+(1+\theta )\e\bigg[\int_0^T\bigg(\varphi_2-\frac {m(m-1+\theta )}{2(1+\theta )}\varphi_m\bigg)(Z(t))\,dt\bigg].\nonumber
\end{eqnarray}
Since $\varphi_{m-1}(z)$ converges to $\sum_{i=1}^{\infty}z_i$ boundedly and pointwise on $\Nic$, we obtain the assertion by taking the limit as $m\to 2{+}$ in \eqref{mg-ineq}.
\end{proof}

\begin{lemma}\label{rate of conv}
For $0<\eps<1$, let $\varphi=\varphi_{m_1}\cdots\varphi_{m_l}$, where $m_1,\ldots,m_l\in[2+\eps,\infty)$.  Then
$$
\|\B_K\eta_K\varphi-\eta_K\B\varphi\|=O(K^{-\eps})\text{ as }K\to\infty.
$$
\end{lemma}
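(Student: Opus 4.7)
The plan is induction on the product length $l$, reducing to a single-factor base case for which the explicit formula for $\B_K\eta_K\varphi_m$ derived in the proof of Lemma~\ref{relcomp} is available.  Applying \eqref{operator inf-inf} to $\varphi_m$ directly yields
\[
\B\varphi_m=\binom{m}{2}(\varphi_{m-1}-\varphi_m)-\tfrac{m}{2}(\theta\varphi_m+\alpha\varphi_{m-1}),
\]
so, subtracting this from the identity for $\B_K\eta_K\varphi_m$ derived in Lemma~\ref{relcomp}, the discrepancy $\B_K\eta_K\varphi_m-\eta_K\B\varphi_m$ on $\NK$ collapses to
\[
\tfrac{m(\theta+\alpha)}{2(K-1)}(\varphi_{m-1}-\varphi_m)(z)+\tfrac{m\alpha}{2}\sum_{i=1}^K(1-z_i)^K z_i^{m-1}\bigg(1-z_i+\tfrac{\sum_{j=1}^K z_j r_j(z)}{\sum_{l=1}^K(1-z_l)^K}\bigg).
\]
The first summand is $O(K^{-1})$ because $|\varphi_{m-1}-\varphi_m|\le 1$.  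In the second, \eqref{fact2} bounds the parenthesised factor by $1+2e^2$, while \eqref{fact1} gives $\sum_{i=1}^K(1-z_i)^K z_i^{m-1}\le K\big(\tfrac{m-1}{K+m-1}\big)^{m-1}\le (m-1)^{m-1}K^{-(m-2)}$, which is $O(K^{-\eps})$ whenever $m\ge 2+\eps$.  This settles the base case $l=1$.

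For the inductive step, write $\varphi=\psi\,\varphi_{m_l}$ with $\psi=\varphi_{m_1}\cdots\varphi_{m_{l-1}}$, and compare the derivation identity \eqref{BKind} for $\B_K$ with the analogous product rule for $\B$, namely
\[
\B(\psi\varphi_{m_l})=\varphi_{m_l}\B\psi+\psi\B\varphi_{m_l}+\sum_{i,j=1}^\infty z_i(\delta_{ij}-z_j)\partial_{z_i}\psi\,\partial_{z_j}\varphi_{m_l},
\]
which follows by applying Leibniz's rule inside \eqref{operator inf-inf}.  On $\NK$ we have $z_i=0$ for $i>K$, so $a_{ij}(z)=z_i(\delta_{ij}-z_j)$ vanishes whenever $i>K$ or $j>K$; hence the two Carré-du-champ terms coincide when restricted to $\NK$.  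Subtracting the two product identities gives
\[
\B_K\eta_K\varphi-\eta_K\B\varphi=(\eta_K\varphi_{m_l})(\B_K\eta_K\psi-\eta_K\B\psi)+(\eta_K\psi)(\B_K\eta_K\varphi_{m_l}-\eta_K\B\varphi_{m_l}),
\]
and uniform boundedness of $\eta_K\psi$ and $\eta_K\varphi_{m_l}$ together with the base case and the induction hypothesis delivers the $O(K^{-\eps})$ bound.

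The main obstacle is the arithmetic of the base case: one must recognise that the migration-type contribution decays at exactly rate $K^{-(m-2)}$ via \eqref{fact1}, so the hypothesis $m\ge 2+\eps$ is precisely what is needed to secure $O(K^{-\eps})$.  The inductive step is then routine, the only nontrivial observation being that the product rules for $\B$ and $\B_K$ coincide on $\NK$ because the diffusion matrix $a_{ij}$ vanishes outside the first $K\times K$ block on that set.
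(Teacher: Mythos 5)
Your proof is correct and takes essentially the same route as the paper's: the base case subtracts $\B\varphi_m$ from the explicit expression \eqref{BKphi} and bounds the two remaining terms via \eqref{fact2} and \eqref{fact1}, giving the $O(K^{-(m-2)})=O(K^{-\eps})$ rate, and the inductive step combines the product rules \eqref{BKind} and \eqref{Bind}. The only (harmless) difference is that you spell out the cancellation of the gradient terms on $\NK$, which the paper leaves implicit in its triangle-inequality bound.
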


\begin{proof}
Consider first $\varphi =\varphi_m$ with $m\ge2+\eps$. Then  
\[
\B\varphi_m=\binom{m}{2}(\varphi_{m-1}-\varphi_m)-\frac m2(\theta\varphi_m+\alpha\varphi_{m-1}).
\]
Recalling \eqref{BKphi}--\eqref{fact1}, we have 
\begin{align*}
&\|\B_K\eta_K\varphi_m-\eta_K\B\varphi_m\|\\
&\quad{}\leq\frac m2\frac {\theta +\alpha}{K-1}\sup_{z\in\NK}|(\varphi_{m-1}-\varphi_m)(z)|\\
&\qquad{}+\sup_{z\in\nabla_K}\frac m2\alpha\sum_{i=1}^K(1-z_i)^K z_i^{m-1}\bigg(1-z_i+\frac{\sum\nolimits_{j=1}^K z_j r_j(z)}{\sum\nolimits_{l=1}^K (1-z_l)^K}\bigg)\\
&\quad\leq\frac m2\frac {\theta +\alpha}{K-1}+\frac m2\alpha K\bigg(\frac{m-1}{K+m-1}\bigg)^{m-1}(1+2e^2)
=O(K^{-\eps}),
\end{align*}
as required. 

By an analogue of the first equation in $(2.13)$ of \cite{EK81}, namely
\begin{equation}\label{Bind}
\B(\varphi\psi )=\psi\B\varphi+\varphi\B\psi+\langle\grad\varphi,a\grad\psi\rangle,
\end{equation}
we get, by \eqref{BKind},
\[
\|\B_{K}\eta_{K}(\varphi\psi )-\eta_{K}\B(\varphi\psi)\|\le\|\psi\|\|\B_{K}\eta_{K}\varphi-\eta_{K}\B\varphi\|+\|\varphi\|\|\B_{K}\eta_{K}\psi-\eta_{K}\B\psi\|.
\]
Thus, the statement of the lemma follows by induction on $l$.
\end{proof}

\begin{lemma}\label{conv on Ni}
Let $\varphi\in\D_0(\B)$ and $p\in\N$.  Then $\B(\varphi_{2+\eps}^p\varphi)\to\B(\varphi_2^p\varphi)$ boundedly and pointwise on $\Ni$.
\end{lemma}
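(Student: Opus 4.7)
The plan is to expand $\B(\varphi_{2+\eps}^p\varphi)$ via the Leibniz-type identity \eqref{Bind} combined with the It\^o-type identity $\B(f^p)=pf^{p-1}\B f+\tfrac{p(p-1)}{2}f^{p-2}\langle\grad f,a\grad f\rangle$ (valid for any second-order diffusion operator), and then to pass to the limit $\eps\to 0+$ termwise. Concretely, the starting identity is
\[
\B(\varphi_{2+\eps}^p\varphi)=\varphi_{2+\eps}^p\B\varphi+p\varphi_{2+\eps}^{p-1}\varphi\,\B\varphi_{2+\eps}+\tfrac{p(p-1)}{2}\varphi_{2+\eps}^{p-2}\varphi\langle\grad\varphi_{2+\eps},a\grad\varphi_{2+\eps}\rangle+p\varphi_{2+\eps}^{p-1}\langle\grad\varphi_{2+\eps},a\grad\varphi\rangle.
\]

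The three $\eps$-dependent building blocks are read off from $\partial_i\varphi_{2+\eps}=(2+\eps)z_i^{1+\eps}$ and $\partial_{ij}\varphi_{2+\eps}=(2+\eps)(1+\eps)z_i^\eps\delta_{ij}$:
\begin{align*}
\B\varphi_{2+\eps}&=\tfrac{(2+\eps)(1+\eps)}{2}(\varphi_{1+\eps}-\varphi_{2+\eps})-\tfrac{2+\eps}{2}(\theta\varphi_{2+\eps}+\alpha\varphi_{1+\eps}),\\
\langle\grad\varphi_{2+\eps},a\grad\varphi_{2+\eps}\rangle&=(2+\eps)^2(\varphi_{3+2\eps}-\varphi_{2+\eps}^2),\\
\langle\grad\varphi_{2+\eps},a\grad\varphi\rangle&=(2+\eps)\bigg(\sum_{i=1}^\infty z_i^{2+\eps}\partial_i\varphi-\varphi_{2+\eps}\sum_{j=1}^\infty z_j\partial_j\varphi\bigg).
\end{align*}
Since $\varphi\in\D_0(\B)$ is a finite polynomial in $\{\varphi_m:m>2\}$, each $\partial_i\varphi$ is a finite sum of terms $c(\varphi)\,m_k\,z_i^{m_k-1}$, so the third display reduces to a finite combination of products of $\varphi_{r+\eps}$'s with $r\ge 2$; in particular every quantity on the right of the expansion is a polynomial in the family $\{\varphi_{r+\eps}:r\ge 1\}$.

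The key analytic ingredient is that $\varphi_{r+\eps}(z)\to\varphi_r(z)$ boundedly and pointwise on $\Nic$ as $\eps\to 0+$ for every $r\ge 1$, which follows at once from $0\le z_i^{r+\eps}\le z_i^r$ and $\sum_i z_i\le 1$ by dominated convergence, with the uniform bound $\varphi_{r+\eps}\le 1$. On $\Ni$ one has in addition the crucial identity $\varphi_1\equiv 1$, which forces
\[
\B\varphi_{2+\eps}\to (1-\alpha)\varphi_1-(1+\theta)\varphi_2=1-\alpha-(1+\theta)\varphi_2=\B\varphi_2,
\]
and the remaining two quantities converge to $\langle\grad\varphi_2,a\grad\varphi_2\rangle$ and $\langle\grad\varphi_2,a\grad\varphi\rangle$ respectively. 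Substituting these back into the expansion and reversing the Leibniz and It\^o identities identifies the pointwise limit on $\Ni$ as $\B(\varphi_2^p\varphi)$; the uniform bound in $\eps\in(0,1)$ is immediate from $\|\varphi_{r+\eps}\|_\infty\le 1$ together with finiteness of the product representation of $\varphi$.

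The single non-routine point — and the reason the statement is restricted to $\Ni$ — is exactly the limit of the $\varphi_{1+\eps}$ factors: on $\Nic$ they converge to $\sum_i z_i$, which equals $1$ precisely on $\Ni$. This is the same mismatch between $\B\varphi_2$ and $\bplim_{\eps\to 0+}\B\varphi_{2+\eps}$ flagged in the discussion preceding the lemma; once it is isolated everything else is routine bookkeeping with finite sums of convergent power sums of the coordinates, so I expect no further obstacle.
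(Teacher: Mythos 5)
Your proposal is correct and takes essentially the same route as the paper: the same expansion of $\B(\varphi_{2+\eps}^p\varphi)$ via \eqref{Bind} together with the power/chain rule, the same term-by-term bounded pointwise limits (with $\langle\grad\varphi_{2+\eps},a\grad\varphi_{2+\eps}\rangle=(2+\eps)^2(\varphi_{3+2\eps}-\varphi_{2+\eps}^2)\to 4(\varphi_3-\varphi_2^2)$, etc.), and the same identification of the only delicate term, namely that $\B\varphi_{2+\eps}\to(1-\alpha)\sum_i z_i-(1+\theta)\varphi_2$, which coincides with $\B\varphi_2$ precisely on $\Ni$. No gaps; your closing remark isolating the $\varphi_{1+\eps}$ factor as the reason for the restriction to $\Ni$ is exactly the point made in the paper's proof.
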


\begin{proof}
By \eqref{Bind}, 
\begin{align*}
\B(\varphi_{2+\eps}^p\varphi)&=\varphi\B\varphi_{2+\eps}^p+\varphi_{2+\eps}^p\B\varphi+\langle\grad\varphi_{2+\eps}^p,a\grad\varphi\rangle\\
&=\varphi\bigg[p\varphi_{2+\eps}^{p-1}\B\varphi_{2+\eps}+\binom{p}2\varphi_{2+\eps}^{p-2}\langle\grad\varphi_{2+\eps},a\grad\varphi_{2+\eps}\rangle\bigg]\\
&\quad\;{}+\varphi_{2+\eps}^p\B\varphi+p\varphi_{2+\eps}^{p-1}\langle\grad\varphi_{2+\eps},a\grad\varphi\rangle\\
&\to\varphi\bigg[p\varphi_2^{p-1}\B\varphi_{2+}+\binom{p}2\varphi_2^{p-2}\langle\grad\varphi_2,a\grad\varphi_2\rangle\bigg]\\
&\quad\;{}+\varphi_2^p\B\varphi+p\varphi_2^{p-1}\langle\grad\varphi_2,a\grad\varphi\rangle
\end{align*}
boundedly and pointwise on $\Nic$ as $\eps$ goes to zero, where
\[
\B\varphi_{2+}(z):=\lim_{\eps\to 0}\B\varphi_{2+\eps}(z)=(1-\alpha )\sum_{
i=1}^{\infty}z_i-(1+\theta )\varphi_2(z),\quad z\in\Nic.
\]
We are also using
\begin{align*}
\langle\grad\varphi_{2+\eps},a\grad\varphi_{2+\eps}\rangle&=(2+\eps)^2(\varphi_{3+2\eps}-\varphi_{2+\eps}^2)\\
&\to4(\varphi_3-\varphi_2^2)=\langle\grad\varphi_2,a\grad\varphi_2\rangle
\end{align*}
and similarly $\langle\grad\varphi_{2+\eps},a\grad\varphi\rangle\to\langle\grad\varphi_2,a\grad\varphi\rangle$, both boundedly and pointwise on $\Nic$.  Of course,
\[
\B\varphi_2(z)=1-\alpha-(1+\theta )\varphi_2(z),\quad z\in\Nic,
\]
so $\B\varphi_{2+}=\B\varphi_2$ on $\Ni$.  We conclude that
\[
\B(\varphi_{2+\eps}^p\varphi)\to \varphi\B\varphi_2^p+\varphi_2^p\B\varphi+\langle\grad\varphi_2^p,a\grad\varphi\rangle=\B(\varphi_2^p\varphi)
\]
boundedly and pointwise on $\Ni$ (but not on $\Nic$).
\end{proof}

We are now ready to state our main result.

\begin{theorem}\label{conv}
Let $Z_K$ be the diffusion process of Proposition \ref{thm: K-diffusion} with initial distribution $\nu_K\in \mathscr{P}(\Delta_K)$.  Let $\B$ be given by \eqref{domain B}--\eqref{operator inf-inf} and let $Z$ be the diffusion process corresponding to the Feller semigroup generated by the closure in $C(\Nic)$ of $\B$, with initial distribution $\nu\in\mathscr{P}(\Nic)$. If $\nu_K\circ\rho_K^{-1}\Rightarrow\nu$, then 
\[
\rho_K(Z_K(\cdot))\Rightarrow Z(\cdot)\text{ in }C_{\Nic}[0,\infty).
\]
If in addition $\nu(\Ni)=1$, then the convergence holds in $C_{\Ni}[0,\infty )$.
\end{theorem}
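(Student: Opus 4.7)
The plan is to execute the martingale problem program outlined in the paper, relying on Lemmas \ref{relcomp}--\ref{conv on Ni} to handle the obstruction that $\B_K\eta_K\varphi$ fails to converge to $\eta_K\B\varphi$ uniformly when $\varphi_2$ is a factor of $\varphi$. By Proposition \ref{rhoK-ZK}, each $\rho_K(Z_K(\cdot))$ is the (unique) solution of the $D_{\NK}[0,\infty)$-martingale problem for $\B_K$ with initial distribution $\nu_K\circ\rho_K^{-1}$, and Lemma \ref{relcomp} gives relative compactness in $D_{\Nic}[0,\infty)$. I take an arbitrary subsequential limit $Z$ along some $K_h\to\infty$; the assumption $\nu_K\circ\rho_K^{-1}\Rightarrow\nu$ ensures $Z(0)\sim\nu$.

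Next I would check that $Z$ solves the martingale problem for $(\B,\D_0(\B))$. For any $\varphi=\varphi_{m_1}\cdots\varphi_{m_l}\in\D_0(\B)$ we have $m_j>2$, so Lemma \ref{rate of conv} yields $\|\B_{K_h}\eta_{K_h}\varphi-\eta_{K_h}\B\varphi\|\to 0$. Since $\varphi$ and $\B\varphi$ are bounded and continuous on $\Nic$, passage to the limit in the martingale identity for $\rho_{K_h}(Z_{K_h}(\cdot))$ (which holds because the latter solves the $\B_{K_h}$-martingale problem) shows that
\[
\varphi(Z(t))-\varphi(Z(0))-\int_0^t\B\varphi(Z(s))\,ds
\]
is a martingale for every $\varphi\in\D_0(\B)$.

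The crux of the argument is lifting this identity from $\D_0(\B)$ to the full domain $\D(\B)$. Any element of $\D(\B)$ is a polynomial in $\varphi_2,\varphi_3,\ldots$, so by linearity it suffices to treat monomials of the form $\varphi_2^p\psi$ where $\psi$ is a product of $\varphi_{m_j}$ with $m_j\ge 3$ (allowing the empty product). For $0<\eps<1$ the function $\varphi_{2+\eps}^p\psi$ lies in $\D_0(\B)$, so the previous step guarantees that
\[
\varphi_{2+\eps}^p\psi(Z(t))-\varphi_{2+\eps}^p\psi(Z(0))-\int_0^t\B(\varphi_{2+\eps}^p\psi)(Z(s))\,ds
\]
is a martingale. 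Letting $\eps\to 0{+}$, the pointwise-bounded convergence $\varphi_{2+\eps}\to\varphi_2$ on $\Nic$ handles the first two terms. For the integrand, Lemma \ref{compcont} guarantees that $Z(s)\in\Ni$ for Lebesgue-a.e.\ $s$ almost surely, so Lemma \ref{conv on Ni} applies pointwise on the relevant set of $s$ and gives $\B(\varphi_{2+\eps}^p\psi)(Z(s))\to\B(\varphi_2^p\psi)(Z(s))$ boundedly; bounded convergence together with the martingale convergence theorem then transfers the martingale property to $\varphi_2^p\psi$. Hence $Z$ solves the martingale problem for $(\B,\D(\B))$.

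Finally, $\B$ generates a Feller semigroup on $C(\Nic)$ by \cite{P09}, so the martingale problem for $(\B,\D(\B))$ with initial distribution $\nu$ is well-posed (Theorem 4.4.1 of \cite{EK86}), pinning down $Z$ as the two-parameter diffusion and forcing the whole sequence $\rho_K(Z_K(\cdot))$ to converge to $Z$. Continuity of the limit sample paths (Section \ref{sec: 2parmodel}) together with Theorem 3.10.2 of \cite{EK86} upgrades the convergence from $D_{\Nic}[0,\infty)$ to $C_{\Nic}[0,\infty)$; and when $\nu(\Ni)=1$, the result of \cite{E14} quoted in Section \ref{sec: 2parmodel} places the limit paths in $C_{\Ni}[0,\infty)$. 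The main obstacle is the third paragraph: uniform convergence of generators breaks down precisely on monomials containing $\varphi_2$, and without the a priori confinement of $Z$ to $\Ni$ supplied by Lemma \ref{compcont}, the $\eps\to0$ approximation by $\varphi_{2+\eps}^p\psi\in\D_0(\B)$ would not identify the correct drift in the limit martingale.
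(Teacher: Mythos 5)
Your proposal is correct and follows essentially the same route as the paper: relative compactness (Lemma \ref{relcomp}), limit points solving the martingale problem on $\D_0(\B)$ via Lemma \ref{rate of conv}, the upgrade to $\D(\B)$ through the a priori confinement to $\Ni$ (Lemmas \ref{compcont} and \ref{conv on Ni}), uniqueness from the fact that the closure of $\B$ generates a Feller semigroup, and the final passage from $D_{\Nic}[0,\infty)$ to $C_{\Nic}[0,\infty)$. The only differences are cosmetic citation choices (the paper invokes the separating-range criterion of \cite{CK15} and \cite{B68} rather than the corresponding results in \cite{EK86}), and your appeal to the ``martingale convergence theorem'' should really just be bounded (dominated) convergence in the conditional-expectation identity, as in the paper.
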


\begin{proof}
First we prove convergence in $D_{\Nic}[0,\infty)$. The proof of this claim is in three steps:
\begin{itemize}
\item[(i)] Every limit point of $\{\rho_K(Z_K(\cdot))\}$ is a solution of the martingale problem for $\B$ as an operator on $\D_0(\B)$;
\item[(ii)] Every limit point of $\{\rho_K(Z_K(\cdot))\}$ is a solution of the martingale problem for $\B$ as an operator on $\D(\B)$;
\item[(iii)] The martingale problem for $\B$ as an operator on $\D(\B)$ has a unique solution for every initial distribution $\nu$.
\end{itemize}

\noindent \textit{Proof of \emph{(i)}.}
Let $Z$ be the limit in distribution of an arbitrary subsequence $\{\rho_{K_h}(Z_{K_h}(\cdot))\}$; see Lemma \ref{relcomp}. Since $\rho_{K_h}(Z_{K_h}(\cdot))$ is a solution of the 
martingale problem for $\B_{K_h}$, 
\[
\varphi(\rho_{K_h}(Z_{K_h}(t)))-\int_0^t\B_{K_h}\eta_{K_h}\varphi (\rho_{K_h}(Z_{K_h}(s)))\,ds=:M_{\varphi}^{(K_h)}(t)
\]
is a continuous martingale for every $\varphi\in \D_0(\B)$. 

By Lemma \ref{rate of conv}, $M_{\varphi}^{(K_h)}$ converges in distribution to 
\[
\varphi (Z(\cdot ))-\int_0^{\cdot}\B\varphi (Z(s))\,ds.
\]
On the other hand  $M_{\varphi}^{(K_h)}(t)$ is uniformly bounded for every $t$, hence the limit is a martingale.\smallskip

\noindent \textit{Proof of \emph{(ii)}.}
It is enough to prove that 
\[
(\varphi_2^p\varphi )(Z(\cdot ))-\int_0^{\cdot}\B(\varphi_2^p\varphi)(Z(s))\,ds=:M(\cdot)
\]
is a martingale for every $\varphi\in \D_0(\B)$ and every $p\in\N$. 

By Lemma \ref{compcont}, almost surely we have
\begin{equation}\label{zero}
\int_0^\infty(1-\I_{\Ni}(Z(s)))\,ds=0.
\end{equation}
Therefore, by Step 1, for every $\eps >0$, 
\[
(\varphi_{2+\eps}^p\varphi )(Z(t))-\int_0^t\I_{\Ni}(Z(s))\B(\varphi_{2+\eps}^p\varphi )(Z(s))\,ds=M_{\eps}(t)
\]
is a martingale. 

It follows from Lemma \ref{conv on Ni} that, almost surely, for all $t\geq 0$, $M_{\eps}(t)$ converges, to 
\[
(\varphi_2^p\varphi )(Z(t))-\int_0^t\I_{\Ni}(Z(s))\B(\varphi_2^p\varphi )(Z(s))\,ds,
\]
which in turn, by \eqref{zero}, almost surely, for all $t\geq 0$, equals $M(t)$.  On the other hand, for every $t\geq 0$, 
$M_{\eps}(t)$ is uniformly bounded, hence $M$ is a martingale.\smallskip 

\noindent \textit{Proof of \emph{(iii)}.}
A sufficient condition for uniqueness of the solution to the martingale problem for $\B$ is that, for each $\lambda>0$, ${\cal R}(\lambda I-\B)$, where ${\cal R}$ denotes the range and $I$ is the identity operator, is separating, i.e., such that, for any pair of probability measures $\mu,\nu\in\mathscr{P}(\Nic)$, $\int_{\Nic}f(z)\,\mu (dz)=\int_{\Nic}f(z)\,\nu (dz)$ for every $f\in {\cal R}(\lambda I-\B)$ implies $\mu =\nu$ (see, e.g., \cite{CK15}, Corollary 2.14).  In the present setup, since the closure of $\B$ generates a strongly continuous contraction semigroup on $C(\Nic)$ by \cite{P09}, then, for each $\lambda >0$, ${\cal R}(\lambda I-\B)$ is dense in $C(\Nic)$ (see, e.g., Proposition 1.2.1 in \cite{EK86}), therefore the condition is satisfied.\smallskip

Finally, the convergence holds in $C_{\Nic}[0,\infty )\subset D_{\Nic}[0,\infty )$ because the distributions of the processes $\rho_K(Z_K(\cdot))$ and $Z(\cdot)$ are concentrated on\linebreak $C_{\Nic}[0,\infty)$ and the Skorokhod topology relativised to $C_{\Nic}[0,\infty )$ coincides with the uniform-on-compact-sets topology on $C_{\Nic}[0,\infty )$ (see for example \cite{B68}, Section 18).  The last assertion of the theorem follows from \cite{E14} by the same argument. 
\end{proof}

\begin{remark} \label{gen-immig} A more careful inspection of the proofs shows that, if  the  mutation rates are given by \eqref{mutation rates}, all the results of this section hold for functions $p_i$ and $r_i$ satisfying the conditions of Section \ref{sec: WF chain} (in particular, $r_i(z)=r(z_i)$) and the following set of conditions:  For Lemma \ref{relcomp} we need only assume
$$
\sup_{z\in\NK}\sum_{j=1}^{K}z_j r_{j}(z)\sum_{i=1}^{K}p_{i}(z)z_{i}=O(1)\text{ as }K\to\infty.
$$
For Lemma \ref{apriori} with a possibly weaker but still adequate lower bound, it suffices that
$$
1-u-u r(u)\geq 0, \quad u\in[0,1], \qquad \sup_{u\in[0,1]}[1-u-ur(u)] u=o(1)\text{ as }K\to\infty,
$$
and
$$
\sup_{z\in\NK}\sum_{j=1}^{K}z_j r_{j}(z)p_1(z)z_1=o(1)\text{ as }K\to\infty.
$$
For Lemma \ref{rate of conv} with a possibly slower but still adequate rate of convergence, it is enough that
$$
\sup_{z\in\NK}\sum_{i=1}^{K}[1-z_i-z_ir(z_i)]z_i^{1+\eps}=o(1)\text{ as }K\to\infty,\quad 0<\eps<1,
$$
and 
$$
\sup_{z\in\NK}\sum_{j=1}^{K}z_jr_{j}(z)\sum_{i=1}^{K}p_{i}(z)z_{i}^{1+\eps}=o(1)\text{ as }K\to\infty,\quad 0<\eps<1.
$$
\end{remark}


\section{Convergence of stationary distributions}\label{sec: conv of stat dist}

We have seen that, for each $K\ge2$, $Z_K^N(\lfloor N\cdot\rfloor)\Rightarrow Z_K(\cdot)$ as $N\to\infty$ (Theorem \ref{thm: chain convergence}) and $\rho_K(Z_K(\cdot))\Rightarrow Z(\cdot)$ as $K\to\infty$ (Theorem \ref{conv}).  Now we want to obtain the analogous results for the stationary distributions.  Our Wright--Fisher Markov chain model is irreducible and aperiodic, and therefore has a unique stationary distribution $\mu_K^N\in\mathscr{P}(\Delta_K^N)$, which we regard as belonging to $\mathscr{P}(\Delta_K)$.  Our $K$-dimensional diffusion process $Z_K$ in $\Delta_K$ is ergodic by Theorem 3.2 of \cite{S81}, and therefore has a unique stationary distribution $\mu_K\in\mathscr{P}(\Delta_K)$.  Technically, Shiga's theorem does not apply to our model because, although our drift coefficients due to mutation meet his Condition II, our drift coefficients due to  migration,
$$
b_i(z):=\alpha p_i(z) \sum_{j=1}^{K}z_j r_{j}(z) -\alpha z_i r_{i}(z),
$$
are not of the form of his drift coefficients due to selection, 
$$
b_i(z):=z_i\bigg(\gamma_i(z)-\sum_{j=1}^Kz_j\gamma_j(z)\bigg).
$$  
Nevertheless, our drift coefficients due to migration do satisfy \eqref{boundary condition 1} and \eqref{boundary condition 2}, which together with smoothness is all that is needed for Shiga's proof. 
Finally, we denote by $\text{PD}(\theta,\alpha)\in\mathscr{P}(\Nic)$ the two-parameter Poisson--Dirichlet distribution, which is the unique stationary distribution of $Z$ in $\Nic$.  We will prove that, for each $K\ge2$, $\mu_K^N\Rightarrow\mu_K$ on $\DK$ as $N\to\infty$, and that $\mu_K\circ\rho_K^{-1}\Rightarrow\text{PD}(\theta,\alpha)$ on $\Nic$ as $K\to\infty$.  This is the two-parameter analogue of Kingman's result showing that the one-parameter Poisson--Dirichlet distribution $\text{PD}(\theta)$ is the weak limit of the descending order statistics of the symmetric Dirichlet distribution with parameter $\theta/(K-1)$.  It is not entirely analogous in that the symmetric Dirichlet distribution with parameter $\theta/(K-1)$ is much more explicit than $\mu_K$.  Nevertheless, it does allow us to give an interpretation to $\text{PD}(\theta,\alpha)$ in the context of population genetics.

\begin{theorem}
For each $K\ge2$, $\mu_K^N\Rightarrow\mu_K$ on $\Delta_K$ as $N\to\infty$.
\end{theorem}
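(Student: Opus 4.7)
The plan is to exploit compactness of $\Delta_K$ together with uniqueness of $\mu_K$ and the process-level convergence already established in Theorem \ref{thm: chain convergence}. Since $\Delta_K$ is compact, $\mathscr{P}(\Delta_K)$ is weakly compact, so the sequence $\{\mu_K^N\}_{N\ge 1}$ is relatively compact, and it suffices to show that every weak subsequential limit coincides with $\mu_K$.

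First I would fix a subsequence $\{N_j\}$ along which $\mu_K^{N_j}\Rightarrow\mu$ for some $\mu\in\mathscr{P}(\Delta_K)$, and for each $j$ run the Markov chain $Z_K^{N_j}(\cdot)$ started from its stationary distribution $\mu_K^{N_j}$. By stationarity, $Z_K^{N_j}(\lfloor N_j t\rfloor)$ has law $\mu_K^{N_j}$ for every $t\ge 0$. Since $Z_K^{N_j}(0)\Rightarrow\mu$, Theorem \ref{thm: chain convergence} yields
\[
Z_K^{N_j}(\lfloor N_j\cdot\rfloor)\Rightarrow Z_K(\cdot)\text{ in }D_{\DK}[0,\infty),
\]
where $Z_K$ is the Feller diffusion of Proposition \ref{thm: K-diffusion} with initial distribution $\mu$. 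Because $Z_K$ has continuous paths, every $t\ge 0$ is a continuity point of the limit, so the one-dimensional marginals converge; for every $f\in C(\DK)$,
\[
\int_{\DK}f\,d\mu_K^{N_j}=\e[f(Z_K^{N_j}(\lfloor N_j t\rfloor))]\longrightarrow\e[f(Z_K(t))].
\]
The left-hand side also tends to $\int f\,d\mu$, so $Z_K(t)$ has law $\mu$ for every $t\ge 0$; that is, $\mu$ is invariant for the Feller semigroup $\{\mathcal{T}_K(t)\}$.

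As recalled in the paragraph preceding the theorem, $Z_K$ is ergodic (Shiga's theorem applies, since the migration drift satisfies \eqref{boundary condition 1}--\eqref{boundary condition 2} together with the required smoothness), so $\mu_K$ is the \emph{unique} invariant distribution and $\mu=\mu_K$. Since every convergent subsequence has the same limit $\mu_K$, the full sequence satisfies $\mu_K^N\Rightarrow\mu_K$. The argument is the standard ``invariant measures pass to the limit'' scheme; the only points one needs from elsewhere in the paper are the process-level weak convergence (Theorem \ref{thm: chain convergence}), path continuity of $Z_K$ (Proposition \ref{thm: K-diffusion}), and the uniqueness of $\mu_K$ supplied by the ergodicity remark, so I do not anticipate a substantive obstacle.
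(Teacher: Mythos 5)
Your proof is correct, but it follows a genuinely different route from the paper's. The paper never invokes the process-level convergence of Theorem \ref{thm: chain convergence}: it works at the generator level, noting that stationarity of $\mu_K^{N_m}$ under the one-step kernel gives $\int_{\Delta_K}N_m(\mathcal{T}_K^{N_m}-I)f\,d\mu_K^{N_m}=0$ for $f\in C^2(\Delta_K)$, and then passes to the limit using the uniform convergence \eqref{WF-conv} to conclude $\int_{\Delta_K}\A_Kf\,d\mu=0$ on the core $C^2(\Delta_K)$, which identifies $\mu$ as the (unique) stationary distribution $\mu_K$. You instead start each chain from its stationary law, use Theorem \ref{thm: chain convergence} along the subsequence together with path continuity of $Z_K$ to get convergence of the one-dimensional marginals, and deduce that $\mu$ is invariant for the semigroup $\{\mathcal{T}_K(t)\}$; uniqueness via Shiga's ergodicity then finishes, exactly as in the paper. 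Your argument is sound (note that $\lfloor N_jt\rfloor$ is an integer time, so the stationary marginal identification is legitimate, and applying the theorem along a subsequence is harmless), and it has the advantage of avoiding the implicit step in the paper's proof that ``$\mu$ annihilates $\A_K$ on a core implies $\mu$ is stationary''; the price is that you invoke the heavier functional limit theorem and the Skorokhod-topology continuity of the time-$t$ projection at continuous paths, where the paper only needs the elementary estimate \eqref{WF-conv}.
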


\begin{proof}
For fixed $K\ge2$, $\{\mu_K^N\}$ is relatively compact because $\DK$ is compact.  It is enough to show that, if $\{N_m\}$ is a subsequence such that $\mu_K^{N_m}\Rightarrow\mu$ as $m\to\infty$, then $\mu=\mu_K$.  Given $f\in C^2(\Delta_K)$, 
$$
\int_{\Delta_K}{\cal A}_Kf\,d\mu=\lim_{m\to\infty}\int_{\Delta_K}{\cal A}_Kf\,d\mu_K^{N_m}=\lim_{m\to\infty}\int_{\Delta_K}N_m(\mathcal{T}_K^{N_m}-I)f\,d\mu_K^{N_m}=0,
$$
where the second equality uses \eqref{WF-conv}.  This shows that $\mu$ is the unique stationary distribution of $Z_K$, which we have denoted by $\mu_K$.
\end{proof}

\begin{theorem}
$\mu_K\circ\rho_K^{-1}\Rightarrow{\rm PD}(\theta,\alpha)$ on $\Nic$ as $K\to\infty$.
\end{theorem}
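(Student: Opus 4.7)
The plan is to combine the process-level convergence in Theorem \ref{conv} with the fact, noted in the paragraph preceding the theorem statement, that $\mathrm{PD}(\theta,\alpha)$ is the \emph{unique} stationary distribution of $Z$ on $\Nic$. The strategy is completely standard once that theorem is in hand: tightness plus identification of every subsequential limit as a stationary distribution of $Z$, followed by uniqueness.

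First I would observe that tightness is free: since $\Nic$ is compact in the product topology, $\mathscr{P}(\Nic)$ is weakly compact, and so $\{\mu_K\circ\rho_K^{-1}\}$ has at least one weak limit point along some subsequence $\{K_h\}$; call it $\mu^*$. It therefore suffices to show $\mu^*=\mathrm{PD}(\theta,\alpha)$ for every such limit point, from which the convergence of the full sequence follows.

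The key step is to transfer stationarity through Theorem \ref{conv}. For each $h$, start the diffusion $Z_{K_h}$ of Proposition \ref{thm: K-diffusion} from its own stationary distribution $\mu_{K_h}$, so that $Z_{K_h}(t)\sim\mu_{K_h}$ for every $t\ge0$ and hence $\rho_{K_h}(Z_{K_h}(t))\sim\mu_{K_h}\circ\rho_{K_h}^{-1}$ for every $t\ge0$; this transfer from $Z_K$ to $\rho_K(Z_K(\cdot))$ is automatic since $\rho_K$ is a deterministic measurable map (and is consistent with Proposition \ref{rhoK-ZK}). Because the initial distributions $\mu_{K_h}\circ\rho_{K_h}^{-1}$ converge weakly to $\mu^*$, Theorem \ref{conv} yields
\[
\rho_{K_h}(Z_{K_h}(\cdot))\Rightarrow Z(\cdot)\text{ in }C_{\Nic}[0,\infty),
\]
where $Z$ is the two-parameter model with $Z(0)\sim\mu^*$. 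Convergence in $C_{\Nic}[0,\infty)$ forces marginal convergence at every fixed $t\ge0$. On the prelimit side these marginals are $\mu_{K_h}\circ\rho_{K_h}^{-1}\Rightarrow\mu^*$, independent of $t$, so $Z(t)\sim\mu^*$ for all $t\ge0$. Thus $\mu^*$ is invariant for the Feller semigroup of $Z$, and by uniqueness of the stationary distribution, $\mu^*=\mathrm{PD}(\theta,\alpha)$.

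There is no genuine obstacle in the argument: all the analytic difficulty—compact containment, the \textit{a priori} no-loss-of-mass estimate, passage from $\D_0(\B)$ to $\D(\B)$ in the martingale problem, and uniqueness of the solution—has already been absorbed into Theorem \ref{conv}. The only minor point to mention is that Theorem \ref{conv} is stated under the hypothesis $\nu_K\circ\rho_K^{-1}\Rightarrow\nu$, and that hypothesis is precisely what our choice of subsequence $\{K_h\}$ and the definition of $\mu^*$ provide, so the theorem applies verbatim.
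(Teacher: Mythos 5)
Your proof is correct, but it takes a genuinely different route from the paper's. You transfer stationarity through the process-level limit: start $Z_{K_h}$ from $\mu_{K_h}$, apply Theorem \ref{conv} along the subsequence (its proof---relative compactness, identification of limit points, uniqueness of the limiting martingale problem---carries over unchanged to a subsequence, which is the only small caveat to your claim that it applies ``verbatim''), use continuity of the evaluation maps on $C_{\Nic}[0,\infty)$ to conclude $Z(t)\sim\mu^*$ for all $t\ge0$, and invoke uniqueness of the stationary distribution of $Z$. The paper instead argues at the level of generators and measures: it uses the stationarity relation $\int_{\nabla_{K_h}}\B_{K_h}\eta_{K_h}\varphi\,d(\mu_{K_h}\circ\rho_{K_h}^{-1})=0$, first with $\varphi=\varphi_2-\varphi_m$ ($2<m<3$) and Lemma \ref{apriori} to show that any subsequential limit $\mu$ satisfies $\mu(\Ni)=1$ (the stationary analogue of the no-loss-of-mass Lemma \ref{compcont}), then with Lemma \ref{rate of conv} to obtain $\int_{\Nic}\B\varphi\,d\mu=0$ for $\varphi\in\D_0(\B)$, and finally with Lemma \ref{conv on Ni} together with the concentration on $\Ni$ to extend this identity to all of $\D(\B)$, which identifies $\mu=\PD(\theta,\alpha)$. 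Both arguments ultimately rest on the same external fact that $\PD(\theta,\alpha)$ is the unique stationary law of $Z$ (and yours additionally presupposes, as does the paper, the existence of the stationary laws $\mu_K$ established via Shiga's theorem); yours is softer and shorter because it reuses the full strength of Theorem \ref{conv}, while the paper's is self-contained at the level of stationary measures and shows directly how the a priori estimate of Lemma \ref{apriori} forces the limiting stationary law onto $\Ni$ without passing through path-space convergence.
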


\begin{proof}
$\{\mu_K\circ\rho_K^{-1}\}$ is relatively compact because $\Nic$ is compact.  It is enough to show that, if $\{K_h\}$ is a subsequence such that $\mu_{K_h}\circ\rho_{K_h}^{-1}\Rightarrow\mu$ on $\Nic$ as $h\to\infty$, then $\mu=\text{PD}(\theta,\alpha)$.  First we show that $\mu$ is concentrated on $\Ni$.  It is intuitively clear and easy to prove that $\mu_K\circ\rho_K^{-1}$, which belongs to $\mathscr{P}(\NK)$ but can also be regarded as belonging to $\mathscr{P}(\Nic)$, is the unique stationary distribution of $\rho_K(Z_K(\cdot))$; indeed,
$$
\int_{\NK} \B_K f\,d(\mu_K\circ\rho_K^{-1})=\int_{\Delta_K}(\B_K f)\circ\rho_K\,d\mu_K
=\int_{\Delta_K}{\cal A}_K(f\circ\rho_K)\,d\mu_K=0,
$$
provided $f\in \D(\B_K)$.  Here we have used \eqref{A vs B}.  Lemma \ref{apriori} therefore implies that, for  $2<m<3$,
\begin{align*}
0=&\int_{\nabla_{K_h}}\B_{K_h}\eta_{K_h}(\varphi_2-\varphi_m)\,d(\mu_{K_h}\circ\rho_{K_h}^{-1})\\
\ge&\int_{\Nic}\bigg[\bigg(1-\alpha-\frac {m(m-1-\alpha )}2\varphi_{m-1}\bigg)\\
&\qquad\quad-\bigg((1+\theta )\varphi_2-\frac {m(m-1+\theta )}2\varphi_m\bigg)\bigg]\,d(\mu_{K_h}\circ\rho_{K_h}^{-1})\\
&-\left[\frac {3(\theta+\alpha)}{2(K_h-1)}+\frac {\alpha(1+2e^2)}{2(K_h+1)}\right]\\
\to&\int_{\Nic}\bigg[\bigg(1-\alpha-\frac {m(m-1-\alpha )}2\varphi_{m-1}\bigg)\\
&\qquad\quad-\bigg((1+\theta )\varphi_2-\frac {m(m-1+\theta )}2\varphi_m\bigg)\bigg]\,d\mu.
\end{align*}
In particular, this last integral is nonpositive.  Now let $m\to2+$ to conclude that
$$
(1-\alpha)\int_{\Nic}\bigg(1-\sum_{i=1}^\infty z_i\bigg)\,\mu(dz)\le0,
$$
or that $\mu(\Ni)=1$.

Next, from Lemma \ref{rate of conv} and $\mu_{K_h}\circ\rho_{K_h}^{-1}\Rightarrow\mu$ we get
\begin{align*}
0&=\,\lim_{h\to\infty}\int_{\nabla_{K_h}}\B_{K_h}\eta_{K_h}\varphi\,d(\mu_{K_h}\circ\rho_{K_h}^{-1})\\
&=\lim_{h\to\infty}\int_{\Nic}\B\varphi\,d(\mu_{K_h}\circ\rho_{K_h}^{-1})=\int_{\Nic}\B\varphi\,d\mu
\end{align*}
for all $\varphi\in\D_0(\B)$.  Let $\varphi\in\D_0(\B)$ and $p\in\N$.  Then from Lemma \ref{conv on Ni} we have
\begin{align*}
0&=\lim_{\eps\to0+}\int_{\Nic}\B(\varphi_{2+\eps}^p\varphi)\,d\mu=\lim_{\eps\to0+}\int_{\Ni}\B(\varphi_{2+\eps}^p\varphi)\,d\mu\\
&=\int_{\Ni}\B(\varphi_2^p\varphi)\,d\mu=\int_{\Nic}\B(\varphi_2^p\varphi)\,d\mu,
\end{align*}
implying that $\int_{\Nic}\B\varphi\,d\mu=0$ for every $\varphi\in\D(\B)$.  This tells us that $\mu=\text{PD}(\theta,\alpha)$, completing the proof.
\end{proof}


\section{The special case \texorpdfstring{$\theta\ge0$}{theta > 0}}
\label{sec:7}

The arguments of the previous sections assumed $0\le \alpha<1$ and $\theta>-\alpha$, which are the usual parameter constraints for $\PD(\theta,\alpha)$ distributions with nonnegative $\alpha$ and ensure that the mutation rate $\theta+\alpha$ in \eqref{mutation rates} is positive. It is interesting to note that if one imposes the stronger requirement that $\theta$ be nonnegative instead of $\theta>-\alpha$, then a modification of the construction allows us to separate the roles of $\theta$ and $\alpha$, which account for different mechanisms rather than jointly contributing to the mutation events. To this end, assume $\theta\ge0$ and modify \eqref{mutation rates} to 
\begin{equation}\label{theta mutation}
\bar u_{ij}:=\frac{\theta}{2N(K-1)}
\end{equation} 
and \eqref{condizioni r} to
\begin{equation*}\label{condizioni rbar}
\bar r_{i}(z):=\begin{cases}[1-(1-z_i)^{K}]/z_i&\text{if $z_i>0$}\\K&\text{if $z_i=0$.}\end{cases}
\end{equation*}
Accordingly, \eqref{drift b_i} becomes
\begin{equation*}
\bar b_i(z):=\,\frac{1}{2}\bigg[\frac{\theta}{K-1}(1-z_{i})-\theta z_{i}+\alpha p_i(z) \sum_{j=1}^{K} z_j \bar r_{j}(z) -\alpha z_i \bar r_{i}(z)\bigg].
\end{equation*} 
Similar arguments to those in the proof of Theorem \ref{conv} still hold in this setting, and there is an analogue of Remark \ref{gen-immig}. Now, however, $\theta$ alone is responsible for mutation through \eqref{theta mutation}, while $\alpha$ acts only through the migration mechanism. In contrast, the combined action of $\theta$ and $\alpha$ in \eqref{mutation rates} is partially remindful of the action of  the same parameters in Pitman's urn scheme construction of the $\PD(\theta,\alpha)$ distribution (see, for example, \cite{P95}, eq.~(15)), where $\theta$ and $\alpha$ jointly determine the probability of observing a new type in the sequence. 


\section*{Acknowledgments}

The second and fourth authors are supported by the European Research Council (ERC) through StG ``N-BNP'' 306406. The third author is partially supported by a grant from the Simons Foundation (209632).


\end{document}